\numberwithin{equation}{section}
\newtheorem{theorem}{Theorem}[section]
\newtheorem{lemma}[theorem]{Lemma}
\newtheorem{corollary}[theorem]{Corollary}
\newtheorem{remark}[theorem]{Remark}
\newtheorem{giep}{GIEP}
\theoremstyle{definition}
\begin{document}
\title[A generalized inverse eigenvalue problem and $m$-functions]
{A generalized inverse eigenvalue problem and $m$-functions}
\author[K. K. Behera]{Kiran Kumar Behera}
\address{
Department of Mathematics,
Indian Institute of Science, Bangalore, India}
\email{kiranbehera@iisc.ac.in}
\thanks{
This research is supported by the Dr. D. S. Kothari postdoctoral
fellowship scheme of University Grants Commission (UGC), India.
}
%
 \subjclass[2010]{Primary 15A29, 30C10}
\keywords
{
Generalized inverse eigenvalue problem;
Linear pencil of tridiagonal matrices;
$m$-functions
}
\date{}
\dedicatory{}
\commby{}
\begin{abstract}
In this manuscript, a generalized inverse eigenvalue problem is considered
that involves a linear pencil
$(z\mathcal{J}_{[0,n]}-\mathcal{H}_{[0,n]})$
of matrices arising in the theory of rational interpolation
and biorthogonal rational functions.
In addition to the reconstruction of the Hermitian matrix
$\mathcal{H}_{[0,n]}$ with the entries $b_j's$,
characterizations of the rational functions that are components of the
prescribed eigenvectors are given.
A condition concerning the positive-definiteness of
$\mathcal{J}_{[0,n]}$ and which is often an
assumption in the direct problem is also isolated.
Further, the reconstruction of $\mathcal{H}_{[0,n]}$
is viewed through the inverse of the pencil
$(z\mathcal{J}_{[0,n]}-\mathcal{H}_{[0,n]})$
which involves the concept of $m$-functions.
\end{abstract}
\maketitle
\section{Introduction}
\label{sec: Introduction}
A generalized inverse eigenvalue problem (GIEP) 
concerns the reconstruction of matrices from a
given set of spectral data. The spectral data may be completely or only
partially specified in terms of eigenvalues and eigenvectors.
Precisely, a GIEP for a pair $(\mathcal{H},\mathcal{J})$ 
of square matrices involves the
generalized eigenvalue equation 
$\mathcal{H}\Phi=z\mathcal{J}\Phi$.
With the prescribed spectral data, the solution to the 
problem consists in the reconstruction
of the matrices $\mathcal{H}$ and/or $\mathcal{J}$
\cite{Lancaster-Ye-IGEP-LAA-1988,
Dai-Yuan-GIEP-JCAM-2009}.

In general, it is often necessary both from the point of view of
practical applications and of mathematical interest that the matrices
involved have a specified structure
\cite{Chu-SIAM-review-1998}.
This introduces a structural constraint on the solution
in addition to the spectral constraint. 
Thus, one may require that both the matrices
$\mathcal{H}$ and $\mathcal{J}$ or one of them to be, for instance, 
banded or Hermitian or Hamiltonian 
\cite{GIEP-Hamiltonian-AMC-2019,
Sen-Sharma-LAA-2014}
and so on.

In the present manuscript,
we consider, as an inverse problem, the generalized 
eigenvalue equation arising
from the continued fraction
\begin{align}
\label{eqn: continued fraction for eigenvalue problem}
\dfrac{1}{u_0(z)-\dfrac{v_0^{L}(z)v_0^{R}(z)}{u_1(z)-
\dfrac{v_1^{L}(z)v_2^{R}(z)}{u_2(z)-\ddots}}},
\end{align}
where $u_j(z)$, $v_j^{L}(z)$ and $v_j^{R}(z)$
are non-vanishing polynomials of degree one
\cite{Baker-Morris-Pade-1981}.
If we terminate the above continued fraction at $u_n(z)$,
then it is a rational function denoted by
$\mathcal{S}_{n+1}(z)=\mathcal{Q}_{n+1}(z)/\mathcal{P}_{n+1}(z)$
where, the polynomials $\mathcal{Q}_n(z)$ of degree $\leq n-1$
and $\mathcal{P}_n(z)$ of degree $\leq n$
satisfy the three term recurrence relation
\cite{Wall-book-1948}
\begin{align}
\label{eqn: three term recurrence relation for m-function}
\mathcal{X}_{n+1}(z)=u_n(z)\mathcal{X}_n(z)-
v_{n-1}^{L}(z)v_{n-1}^{R}(z)\mathcal{X}_{n-1}(z),
\quad
n=0,1,2,\cdots,
\end{align}
with the initial conditions
\begin{align}
\label{eqn: initial conditions}
\mathcal{Q}_{-1}(z)=-1,
\quad\mathcal{Q}_0(z)=0,
\quad\mathcal{P}_{-1}(z)=0
\quad\mbox{and}\quad
\mathcal{P}_0(z)=1.
\end{align}
The linear pencil that is associated with
$\mathcal{S}_{n+1}(z)$ and
\eqref{eqn: three term recurrence relation for m-function}
is $(z\mathcal{J}_{[0,n]}-\mathcal{H}_{[0,n]})$
where the matrices
$\mathcal{H}_{[0,n]}$ and $\mathcal{J}_{[0,n]}$
are tridiagonal.

In the present manuscript, we consider the matrices
\begin{align*}
\mathcal{H}_{[0,n]}=
\left(
  \begin{array}{cccccc}
    a_0 & b_0 & 0 & \cdots & 0 & 0 \\
    \bar{b}_0 & a_1 & b_1 & \cdots & 0 & 0 \\
    0 & \bar{b}_1 & a_2 & \cdots & 0 & 0 \\
    \vdots & \vdots & \vdots & \ddots & \vdots & \vdots \\
    0 & 0 & 0 & \cdots & a_{n-1} & b_{n-1} \\
    0 & 0 & 0 & \cdots & \bar{b}_{n-1} & a_n \\
  \end{array}
\right),
\end{align*}
\begin{align*}
\mathcal{J}_{[0,n]}=
\left(
  \begin{array}{cccccc}
    c_0 & d_0 & 0 & \cdots & 0 & 0 \\
    d_0 & c_1 & d_1 & \cdots & 0 & 0 \\
    0 & d_1 & c_2 & \cdots & 0 & 0 \\
    \vdots & \vdots & \vdots & \ddots & \vdots & \vdots \\
    0 & 0 & 0 & \cdots & c_{n-1} & d_{n-1} \\
    0 & 0 & 0 & \cdots & d_{n-1} & c_n \\
  \end{array}
\right),
\quad
d_j\neq0,
\quad
0\leq j\leq n-1,
\end{align*}
and the following generalized inverse eigenvalue problem.
\begin{giep}
\label{giep: GIEP}
Given:
the symmetric matrix $\mathcal{J}_{[0,n]}$,
the hermitian matrix $\mathcal{H}_{[0,k]}$,
real numbers $\lambda$ and $\mu$,
and vectors
$p_{[k,n]}^{R}=(p_{k}^{R},p_{k+1}^{R},\cdots,p_n^{R})^{T}$
and
$s_{[k,n]}^{R}=(s_{k}^{R},s_{k+1}^{R},\cdots,s_n^{R})^{T}$,
where $1\leq k\leq n-1$.
To find:
\begin{enumerate}[(i)]
\item
hermitian matrix $\mathcal{H}_{[0,n]}$ with
eigenvalues $\lambda$ and $\mu$
such that $\mathcal{H}_{[0,k]}$ is the leading principal
sub-matrix of $\mathcal{H}_{[0,n]}$,
\item
vectors $p_{[0,k-1]}^{R}=(p_0^{R},p_1^{R},\cdots,p_{k-1}^{R})^{T}$
and $s_{[0,k-1]}^{R}=(s_0^{R},s_1^{R},\cdots,s_{k-1}^{R})^{T}$
such that
\begin{align*}
p_{[0,n]}^{R}=
\left(
  \begin{array}{c}
    p_{[0,k-1]}^{R} \\
    p_{[k,n]}^{R} \\
  \end{array}
\right)
\quad\mbox{and}\quad
s_{[0,n]}^{R}=
\left(
  \begin{array}{c}
    s_{[0,k-1]}^{R} \\
    s_{[k,n]}^{R} \\
  \end{array}
\right),
\end{align*}
are the right eigenvectors of the matrix pencil
$(z\mathcal{J}_{[0,n]}-\mathcal{H}_{[0,n]})$,
corresponding to the
eigenvalues $\lambda$ and $\mu$ respectively.
\end{enumerate}
\end{giep}
The pencil $z\mathcal{J}_{[0,n]}-\mathcal{H}_{[0,n]}$,
which is a linear pencil of tridiagonal matrices
arises in the theory of biorthogonal rational functions
and rational interpolation
\cite{Beckermann-D-Z-linear-pencil-JAT-2010,
Zhedanov-GEP-JAT-1998}.
A particular case, in which the $b's$ appearing in
$\mathcal{H}_{[0,n]}$
are purely imaginary and the $c's$ appearing in
$\mathcal{J}_{[0,n]}$
 are unity, has its origins in the continued fraction
representation of Nevanlinna functions, which in turn
are obtained via the Cayley transformation
of the continued fraction representation of a
Carath\'{e}odory function
\cite{Wall-book-1948}
(see also \cite{Maxim-note-JAT-2017}).
As further specific illustrations, the rational functions
arising as components of eigenvectors
in such cases have been related to a class of
hypergeometric polynomials orthogonal on the unit
circle
\cite{Ismail-Ranga-GEP-LAA-2019}
as well as pseudo-Jacobi polynomials
(or Routh-Romanovski polynomials)
\cite{Maxim-note-JAT-2017}.

In the direct problem, the components of the right eigenvector
of the linear pencil
$(z\mathcal{J}_{[0,n]}-\mathcal{H}_{[0,n]})$
are rational functions with poles at $z=b_j/d_j$ while that of the
left eigenvector have poles at $z=\bar{b}_j/d_j$.
In addition to poles, the entries of the matrices
$\mathcal{J}_{[0,n]}$ and $\mathcal{H}_{[0,n]}$
also completely specify the numerators of such rational functions
appearing as polynomial solutions of a three term recurrence relation.

It is also known that the zeros of these numerator polynomials are the
eigenvalues of the linear pencil under consideration
\cite[Theorem 1.1]{Ismail-Ranga-GEP-LAA-2019}.
These numerator polynomials are precisely normalized
$\mathcal{P}_{j}(z)$, the denominator of the
convergents $\mathcal{S}_{j}(z)$ of the continued fraction
\eqref{eqn: continued fraction for eigenvalue problem}.
Further, it can be verified with the recurrence relation
\eqref{eqn: three term recurrence relation for m-function}
and the initial conditions
\eqref{eqn: initial conditions}
that the following expressions
\begin{align*}
\mathcal{P}_{n+1}(z)=\det{(z\mathcal{J}_{[0,n]}-\mathcal{H}_{[0,n]})},
\quad
\mathcal{Q}_{n+1}(z)=\det{(z\mathcal{J}_{[1,n]}-\mathcal{H}_{[1,n]})},
\end{align*}
hold which lead to the formula
\begin{align*}
\mathcal{S}_{n+1}(z)=\frac{\mathcal{Q}_{n+1}(z)}{\mathcal{P}_{n+1}(z)}=
\left\langle
(z\mathcal{J}_{[0,n]}-\mathcal{H}_{[0,n]})^{-1}e_0,e_0
\right\rangle,
\end{align*}
with the standard inner product 
\begin{align*}
\langle
x,y
\rangle
=
\sum_{j=0}^{\infty}x_i\bar{y}_j,
\quad
x=(x_0,x_1,\cdots)\in\ell^2,
\quad
y=(y_0,y_1,\cdots)\in\ell^2.
\end{align*}
on the space
$\ell^{2}$ of complex square summable sequences.

A fundamental object related to a pair $(\mathcal{H}, \mathcal{J})$ of matrices
is the function
\begin{align*}
\mathfrak{m}(z)=
\left\langle
(z\mathcal{J}-\mathcal{H})^{-1}e_0,e_0
\right\rangle,
\quad
z\in\rho(\mathcal{H},\mathcal{J})
\end{align*}
called the $m$-function or the Weyl function of the linear pencil
$(z\mathcal{J}-\mathcal{H})$
\cite{Beckermann-D-Z-linear-pencil-JAT-2010}
(see also
\cite{Aptekarev-resolvent-criteria-PAMS-1995,
Beckermann-Weyl-function-Const.-Apprx-1997}).
Here $\sigma(\mathcal{H},\mathcal{J})$
and $\rho(\mathcal{H},\mathcal{J}):=
\mathbb{C}\setminus\sigma(\mathcal{H},\mathcal{J})$
are, respectively, the spectrum and the resolvent set
of the pencil $(z\mathcal{J}-\mathcal{H})$.
We can denote similarly the $m$-function
\begin{align}
\label{eqn: m-functions for finite pencil}
\mathfrak{m}(z,j+1)=
\frac{\mathcal{Q}_{j+1}(z)}{\mathcal{P}_{j+1}(z)}=
\left\langle
(z\mathcal{J}_{[0,j]}-\mathcal{H}_{[0,j]})^{-1}e_0,e_0
\right\rangle,
\quad
z\in\rho(\mathcal{H}_{[0,j]},\mathcal{J}_{[0,j]}),
\end{align}
of the finite pencil
$(z\mathcal{J}_{[0,j]}-\mathcal{H}_{[0,j]})$.

Thus, a way to interpret the reconstruction of the matrix
$\mathcal{H}_{[0,n]}$
is to determine its entries in terms of rational functions
with arbitrary poles. These rational functions enter into
the problem as components of a prescribed eigenvector,
while the structural constraint of the pencil being tridiagonal
characterizes these poles.

Our primary goal in this manuscript is to find a representation of the entries
$b_j's$ of the matrix $\mathcal{H}_{[0,n]}$ in terms of given spectral points
and corresponding eigenvectors.
We find characterizations of both the given poles and the entries $b_j's$
which appear in special matrix pencils as mentioned earlier.
A condition concerning the positive-definiteness of
$\mathcal{J}_{[0,n]}$ and which is often an
assumption in the direct problem is also isolated.
Further, we have a view at the entries $b_j's$ through
the $\mathfrak{m}$-functions
\eqref{eqn: m-functions for finite pencil}
which, as is obvious, involve a
point in the resolvent set and not in the spectrum of the pair
$(\mathcal{H}_{[0,n]},\mathcal{J}_{[0,n]})$.

The manuscript is organized as follows.
Section~\ref{sec: preliminary results}
includes preliminary results that illustrate the key role played by the
entry $b_k$ in the inverse approach to the linear pencil
$(z\mathcal{J}_{[0,n]}-\mathcal{H}_{[0,n]})$.
The matrix $\mathcal{H}_{[0,n]}$ is reconstructed
in Section~\ref{sec: solution to the giep}
thereby solving GIEP~\ref{giep: GIEP}.
In Section~\ref{sec: view with m-functions}
we have a further look at the problem through
$m$-functions that involves computing the inverse of the matrix
$(z\mathcal{J}_{[0,n]}-\mathcal{H}_{[0,n]})$.
\section{Preliminary results}
\label{sec: preliminary results}
In this section, we derive some results that will help in solving the GIEP.
Though the entries are yet to be determined, we use them as
symbols in the computation, with the final result depending only on $b_k$
and given components of the eigenvector. In a way, these results exhibit the
role played by the specific entry $b_k$ in the solution.

First of all, it can be seen that
if $\mathcal{H}_{[0,n]}$ were completely specified,
the leading minors $\mathcal{P}_{m}(z)$
of $(z\mathcal{J}_{[0,n]}-\mathcal{H}_{[0,n]})$
satisfy the three term recurrence relation
\begin{align}
\label{eqn: recurrence relation satisfied by numerator polynomials}
\mathcal{X}_{m+1}(z)=(z c_m-a_m)\mathcal{X}_m(z)-
(z d_{m-1}-b_{m-1})(z d_{m-1}-\bar{b}_{m-1})
\mathcal{X}_{m-1}(z),
\end{align}
for $0\leq m\leq n$,
where we define $\mathcal{P}_{-1}(z):=0$ and $\mathcal{P}_0(z):=1$.
If $\kappa_m$ is the leading coefficient of $\mathcal{P}_m(z)$, then from
\eqref{eqn: recurrence relation satisfied by numerator polynomials}, we have
$\kappa_{m+1}=c_m\kappa_m-d_{m-1}^2\kappa_{m-1}$, with $\kappa_0=1$
and $\kappa_1=c_0$.
Hence, if
\begin{align*}
\frac{\kappa_m}{\kappa_{m-1}}\neq\frac{d_{m-1}^2}{c_m},
\quad
m\geq1,
\end{align*}
then $\mathcal{P}_{m+1}(z)$ is a polynomial of degree $m+1$.
Further,
$(z\mathcal{J}_{[0,n]}-\mathcal{H}_{[0,n]})p_{[0,n]}^{R}=0$
yields the following relations
\begin{align}
\label{eqn: rational equation for components}
(z d_{m-1}-\bar{b}_{m-1})p_{m-1}^{R}(z)+
(z c_m-a_m)p_m^{R}(z)+
(z d_m-b_m)p_{m+1}^{R}(z)=0,
\end{align}
for $m=0,1,\cdots,n$, where $p_{n+1}^{R}(z)=0$
and we define $p_{-1}^{R}(z):=0$.
The former equality occurs if
$z\in\sigma(\mathcal{H}_{[0,n]}, \mathcal{J}_{[0,n]})$.
Moreover, with $p_0^{R}(z)$ a non-vanishing function to be specified,
the components of the eigenvector $p_{[0,n]}^{R}$ can be obtained from
\eqref{eqn: rational equation for components},
for instance by induction,
in the form of the rational functions
\begin{align}
\label{eqn: components of the eigenvector Phi}
p_m^{R}(z)=\frac{\mathcal{P}_m(z)}
{\prod_{j=0}^{m-1}(b_j-zd_j)}p_0^{R}(z),
\quad
m=1,2,\cdots,k,k+1,\cdots,n-1,
\end{align}
and $p_n^{R}(z)$ obtained from
$(zc_n-a_n)p_n^{R}(z)=
(\bar{b}_{n-1}-z d_{n-1})p_{n-1}^{R}(z)$.
However, because of the prescribed data,
we will assume that the components of the vector
$p_{[k,n]}^{R}$
are given in the form
\begin{align}
\label{eqn: given form of rational functions}
p_m^{R}(z)=\frac{\mathcal{T}_{m}(z)}
{\eta_{m}^{(z)}
\prod_{j=0}^{m-1}(\alpha_j-z)},
\quad
\eta_{m}^{(z)}\in\mathbb{R}\setminus\{0\},
\quad
m=k,k+1,\cdots,n,
\end{align}
where $\mathcal{T}_m(z)$ is a polynomial of degree $m$
with leading coefficient $\varsigma_m$.
But, we note that once $\mathcal{H}_{[0,n]}$
is determined,
a component of the right eigenvector of
$(z\mathcal{J}_{[0,n]}-\mathcal{H}_{[0,n]})$
must be of the form
\eqref{eqn: components of the eigenvector Phi}.
In particular, if we look at $p_k^{R}(z)$,
this would imply that the set
$\{\alpha_0,\alpha_1,\cdots,\alpha_{k-1}\}$
is necessarily a permutation of the set
$\{b_0/d_0, b_1/d_1,\cdots,b_{k-1}/d_{k-1}\}$
which is known.
The inverse problem thus consists of $\alpha_j$,
$j=k,\cdots,n-1$, being arbitrary, 
on which the determination of $b_j$, $j=k,\cdots,n-1$, depends.
\begin{lemma}
\label{lemma: vector Phi-1}
Given $(\lambda, p_{[0,n]}^{R})$ an eigen-pair for
$(\mathcal{H}_{[0,n]}, \mathcal{J}_{[0,n]})$,
let $\lambda\notin\sigma(\mathcal{H}_{[0,k]},
\mathcal{J}_{[0,k]})$,
then the components of
$p_{[0,k-1]}^{R}(z)$ are given by
\begin{align*}
p_m^{R}(z)
&=
\frac{(b_k-z d_k)\prod_{j=m}^{k-1}(b_j-z d_j)\mathcal{P}_m(z)}
{\mathcal{P}_{k+1}(z)}p_{k+1}^{R}(z),
\quad
m=1,2,\cdots, k-1,
\end{align*}
at $z=\lambda$ and $p_0^{R}(\lambda)$ 
assumed to be a non-vanishing function of $\lambda$.
\end{lemma}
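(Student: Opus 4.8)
The plan is to take the already-derived representation \eqref{eqn: components of the eigenvector Phi} of the eigenvector components as the starting point and simply change the normalizing component from $p_0^{R}(z)$ to $p_{k+1}^{R}(z)$. Formula \eqref{eqn: components of the eigenvector Phi} writes every $p_m^{R}(z)$, $1\le m\le n-1$, as a multiple of the single function $p_0^{R}(z)$, so the whole argument reduces to eliminating $p_0^{R}(z)$ between the expression for a generic low index $m$ and the expression for the index $k+1$.

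Concretely, I would record the two instances
\[
p_m^{R}(z)=\frac{\mathcal{P}_m(z)}{\prod_{j=0}^{m-1}(b_j-zd_j)}\,p_0^{R}(z),
\qquad
p_{k+1}^{R}(z)=\frac{\mathcal{P}_{k+1}(z)}{\prod_{j=0}^{k}(b_j-zd_j)}\,p_0^{R}(z),
\]
valid for $1\le m\le k-1$. Solving the second for $p_0^{R}(z)$ and inserting it into the first cancels $p_0^{R}(z)$ and yields
\[
p_m^{R}(z)=\frac{\mathcal{P}_m(z)}{\mathcal{P}_{k+1}(z)}\,
\frac{\prod_{j=0}^{k}(b_j-zd_j)}{\prod_{j=0}^{m-1}(b_j-zd_j)}\,p_{k+1}^{R}(z).
\]
The quotient of products telescopes to $\prod_{j=m}^{k}(b_j-zd_j)=(b_k-zd_k)\prod_{j=m}^{k-1}(b_j-zd_j)$, which is exactly the claimed expression.

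The one point requiring justification --- and the sole place the hypothesis $\lambda\notin\sigma(\mathcal{H}_{[0,k]},\mathcal{J}_{[0,k]})$ is used --- is the legitimacy of dividing by $\mathcal{P}_{k+1}(z)$ once the identities are evaluated at $z=\lambda$. Since $\mathcal{P}_{k+1}(z)=\det(z\mathcal{J}_{[0,k]}-\mathcal{H}_{[0,k]})$, the zeros of $\mathcal{P}_{k+1}$ are precisely the points of $\sigma(\mathcal{H}_{[0,k]},\mathcal{J}_{[0,k]})$; the hypothesis therefore forces $\mathcal{P}_{k+1}(\lambda)\neq 0$, so $p_0^{R}(\lambda)$ may legitimately be solved for. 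The assumption that $p_0^{R}(\lambda)$ is non-vanishing guarantees in turn that \eqref{eqn: components of the eigenvector Phi} is a genuine normalization at $z=\lambda$ and that the displayed ratios are not of the indeterminate form $0/0$.

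I do not expect any substantive obstacle beyond this bookkeeping; the computation is purely algebraic once \eqref{eqn: components of the eigenvector Phi} is in hand. The only mild care concerns the index $k+1$: when $k\le n-2$ it falls inside the stated range $1\le m\le n-1$ of \eqref{eqn: components of the eigenvector Phi}, while in the boundary case $k=n-1$ one reaches $p_{k+1}^{R}=p_n^{R}$, for which the same representation is recovered by running the induction behind \eqref{eqn: components of the eigenvector Phi} one step further, that is, by applying the three-term relation \eqref{eqn: rational equation for components} at $z=\lambda$. Either way the elimination of $p_0^{R}(\lambda)$ produces the asserted formula.
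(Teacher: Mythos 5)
Your proof is correct and takes essentially the same route as the paper's: both hinge on the fact that $\lambda\notin\sigma(\mathcal{H}_{[0,k]},\mathcal{J}_{[0,k]})$ forces $\mathcal{P}_{k+1}(\lambda)=\det(\lambda\mathcal{J}_{[0,k]}-\mathcal{H}_{[0,k]})\neq0$, and both eliminate $p_0^{R}(\lambda)$ by inverting the instance of \eqref{eqn: components of the eigenvector Phi} at index $k+1$, with $b_k$ treated as a yet-unknown symbol. The only cosmetic difference is that the paper propagates to $m=1,\dots,k-1$ by induction through the recurrence \eqref{eqn: rational equation for components} where you substitute the closed form directly and telescope, and your explicit handling of the boundary case $k=n-1$ (where $p_{k+1}^{R}=p_n^{R}$ is governed by the terminal equation rather than \eqref{eqn: components of the eigenvector Phi}) is a point the paper leaves implicit.
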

\begin{proof}
Since $\lambda\notin \sigma(\mathcal{H}_{[0,k]},\mathcal{J}_{[0,k]})$,
$\det(\lambda\mathcal{J}_{[0,k]}-\mathcal{H}_{[0,k]})\neq0$,
which implies that $\mathcal{P}_{k+1}(\lambda)\neq0$.
If we use the form of $p_{k+1}^{R}(\lambda)$
as suggested by \eqref{eqn: components of the eigenvector Phi}
with $b_k$ unknown at the moment,
we will have that $p_{k+1}^{R}(\lambda)\neq0$ and
\begin{align*}
p_0^{R}(\lambda)=
\frac{\prod_{j=0}^{k}(b_j-\lambda d_j)p_{k+1}^{R}(\lambda)}
{\mathcal{P}_{k+1}(\lambda)}=
\frac{(b_k-\lambda d_k)p_{k+1}^{R}(\lambda)}
{\mathcal{P}_{k+1}(\lambda)}
\prod_{j=0}^{k-1}(b_j-\lambda d_j)\mathcal{P}_0(\lambda).
\end{align*}
Then, $p_1^{R}(\lambda)$ can be obtained using
\eqref{eqn: rational equation for components}
for $m=0$ as
$(\lambda c_0-a_0)p_0^{R}(\lambda)+
(\lambda d_0-b_0)p_1^R(\lambda)=0$
giving
\begin{align*}
p_1^{R}(\lambda)=\frac{(b_k-\lambda d_k)
p_{k+1}^{R}(\lambda)}
{\mathcal{P}_{k+1}(\lambda)}
\prod_{j=1}^{k-1}(b_j-\lambda d_j)
\mathcal{P}_1(\lambda).
\end{align*}
The rest of the proof can be completed by induction
using
\eqref{eqn: rational equation for components}
and
\eqref{eqn: components of the eigenvector Phi}.
\end{proof}
We note that $\mathcal{P}_{k+1}(\lambda)$ is known since
\eqref{eqn: recurrence relation satisfied by numerator polynomials}
involves $a_k$ and $b_{k-1}$.
Thus, Lemma \ref{lemma: vector Phi-1} shows that once $b_k$ is computed,
the vector $p_{[0,k-1]}^{R}$ can be uniquely obtained in terms of
$p_{k+1}^{R}(\lambda)$, which is now known in the form
given by \eqref{eqn: given form of rational functions}.
This would determine $p_{[0,k-1]}^{R}$ at $z=\lambda$
completely.

To proceed further, we will make use of rational functions
of the form
\begin{align}
\label{eqn: components of the left eigenvector Psi}
p_m^{L}(z)=\frac{\mathcal{P}_m(z)}
{\prod_{j=0}^{m-1}(\bar{b}_j-z d_j)}
p_0^{L}(z),
\quad
m=1,2\cdots, k-1,k,\cdots,n-1,
\end{align}
and $p_{n}^{L}(z)$ obtained from the equation
$(zc_n-a_n)p_n^{L}(z)=
(b_{n-1}-z d_{n-1})p_{n-1}^{L}(z)$.
These arise as components of the left eigenvector $p_{[0,n]}^{L}$
of $(z\mathcal{J}_{[0,n]}-\mathcal{H}_{[0,n]})$
corresponding to the eigenvalue $z=\lambda$
and owing to the underlying hermitian character of the
problem, satisfy
$p_j^{L}(\lambda)=\overline{p_j^{R}(\lambda)}$, $0\leq j\leq n$.
Again, we reiterate that $p_j^{L}(z)$ is specified
in the form \eqref{eqn: components of the left eigenvector Psi}
only for
$j=0,1,\cdots, k-1$, while $p_m^{L}(z)$ is obtained in the form
suggested by
\eqref{eqn: given form of rational functions} for $m=k,k+1,\cdots,n$.

Similarly, we define the rational functions $s_m^{L}(\mu)$ and $s_m^{R}(\mu)$
corresponding to the eigenvalue $\mu$. Lemma~\ref{lemma: vector Phi-1}
with $\lambda$ replaced by $\mu$ and the assumption
$\mu\notin\sigma(\mathcal{H}_{[0,k]},\mathcal{J}_{[0,k]})$
gives the corresponding expressions for
$s_j^{R}(z)$ at $z=\mu$ for $j=0,1,\cdots,k-1$.

For ease of notations, we use $p_j^{R}:=p_j^{R}(z)$ and similarly for others.
Let us also denote by $\mathcal{J}_{[k+1,n]}$ and $\mathcal{H}_{[k+1,n]}$,
as is clear from the notations,
the trailing matrices obtained by removing the first $k+1$ rows and columns
from
$\mathcal{J}_{[0,n]}$ and $\mathcal{H}_{[0,n]}$ respectively.
\begin{lemma}
\label{lemma: result from equating trace}
Suppose $\lambda,\mu\notin\sigma(\mathcal{H}_{[0,i]}, \mathcal{J}_{[0,i]})$
for $i=k-1,k$.
Then, the following identities
\begin{align}
\label{eqn: Wronskian-result for trailing J before mu tends to lambda}
(\lambda-\mu)p_{[k+1,n]}^{L}\mathcal{J}_{[k+1,n]}s_{[k+1]}^{R}
&=
(b_k-\lambda d_k)p_k^{L}s_{k+1}^{R}-(\bar{b}_k-\mu d_{k})p_{k+1}^{L}s_{k}^{R},
\\
\label{eqn: Wronskian-result for J-(k+1) before mu tends to lambda}
(\lambda-\mu)s_{[0,k]}^{L}\mathcal{J}_{[0,k]}p_{[0,k]}^{R}
&=
(\bar{b}_k-\mu d_k)s_{k+1}^{L}p_{k}^{R}-(b_k-\lambda d_{k})s_{k}^{L}p_{k+1}^{R},
\end{align}
hold.
\end{lemma}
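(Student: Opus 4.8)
The plan is to read \eqref{eqn: Wronskian-result for trailing J before mu tends to lambda} and \eqref{eqn: Wronskian-result for J-(k+1) before mu tends to lambda} as Green's (Wronskian-type) identities: one restricts the pencil to a principal block, and the single coupling term that the restriction severs at the $k/(k{+}1)$ interface is exactly what appears on the right-hand side. First I would record the recurrences in play. The right eigenvector $p_{[0,n]}^{R}$ at $z=\lambda$ satisfies \eqref{eqn: rational equation for components}, and $s_{[0,n]}^{R}$ at $z=\mu$ satisfies the same relation with $\lambda$ replaced by $\mu$. By the hermitian character of the problem (cf. $p_j^{L}(\lambda)=\overline{p_j^{R}(\lambda)}$) the left eigenvectors $p_{[0,n]}^{L}$ (at $\lambda$) and $s_{[0,n]}^{L}$ (at $\mu$), viewed as row vectors, annihilate $(\lambda\mathcal{J}_{[0,n]}-\mathcal{H}_{[0,n]})$ and $(\mu\mathcal{J}_{[0,n]}-\mathcal{H}_{[0,n]})$ from the left; concretely they obey the companion of \eqref{eqn: rational equation for components} in which $b_j$ and $\bar b_j$ are interchanged.

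For \eqref{eqn: Wronskian-result for trailing J before mu tends to lambda} I would restrict these full equations to the trailing block $[k{+}1,n]$. Since $\mathcal{J}_{[0,n]}$ and $\mathcal{H}_{[0,n]}$ are tridiagonal, the only rows and columns whose equations are disturbed by the restriction are those at the interface, and one obtains the rank-one ``defect'' relations
\begin{align*}
p_{[k+1,n]}^{L}(\lambda\mathcal{J}_{[k+1,n]}-\mathcal{H}_{[k+1,n]})
&=-(\lambda d_k-b_k)p_k^{L}\,e_{k+1}^{T},\\
(\mu\mathcal{J}_{[k+1,n]}-\mathcal{H}_{[k+1,n]})s_{[k+1,n]}^{R}
&=-(\mu d_k-\bar b_k)s_k^{R}\,e_{k+1},
\end{align*}
where $e_{k+1}$ is the first coordinate vector of the trailing block. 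Next I evaluate the scalar $p_{[k+1,n]}^{L}(\lambda\mathcal{J}_{[k+1,n]}-\mathcal{H}_{[k+1,n]})s_{[k+1,n]}^{R}$ in two ways. Applying the first defect relation gives $-(\lambda d_k-b_k)p_k^{L}s_{k+1}^{R}$, while writing $\lambda\mathcal{J}_{[k+1,n]}-\mathcal{H}_{[k+1,n]}=(\mu\mathcal{J}_{[k+1,n]}-\mathcal{H}_{[k+1,n]})+(\lambda-\mu)\mathcal{J}_{[k+1,n]}$ and using the second defect relation gives $-(\mu d_k-\bar b_k)s_k^{R}p_{k+1}^{L}+(\lambda-\mu)p_{[k+1,n]}^{L}\mathcal{J}_{[k+1,n]}s_{[k+1,n]}^{R}$. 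Equating these two expressions and solving for the $(\lambda-\mu)$ term yields \eqref{eqn: Wronskian-result for trailing J before mu tends to lambda} after rewriting $-(\lambda d_k-b_k)=(b_k-\lambda d_k)$ and $(\mu d_k-\bar b_k)=-(\bar b_k-\mu d_k)$.

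The identity \eqref{eqn: Wronskian-result for J-(k+1) before mu tends to lambda} is proved by the same device on the leading block $[0,k]$, this time with $s_{[0,k]}^{L}$ (at $\mu$) and $p_{[0,k]}^{R}$ (at $\lambda$). Here the surviving defect sits at the bottom row and column (index $k$, coupling to $k{+}1$), giving $(\lambda\mathcal{J}_{[0,k]}-\mathcal{H}_{[0,k]})p_{[0,k]}^{R}=-(\lambda d_k-b_k)p_{k+1}^{R}e_k$ and $s_{[0,k]}^{L}(\mu\mathcal{J}_{[0,k]}-\mathcal{H}_{[0,k]})=-(\mu d_k-\bar b_k)s_{k+1}^{L}e_k^{T}$, with $e_k$ the last coordinate vector of the leading block; evaluating $s_{[0,k]}^{L}(\lambda\mathcal{J}_{[0,k]}-\mathcal{H}_{[0,k]})p_{[0,k]}^{R}$ in two ways and rearranging gives \eqref{eqn: Wronskian-result for J-(k+1) before mu tends to lambda}. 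Conceptually both identities are the block form of the eigenvector orthogonality $(\lambda-\mu)p_{[0,n]}^{L}\mathcal{J}_{[0,n]}s_{[0,n]}^{R}=0$: splitting this scalar into its leading block, trailing block, and interface contributions is what the lemma records.

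I expect the only real obstacle to be the bookkeeping of these single boundary terms and, above all, their signs. The two blocks have opposite orientation---the severed coupling lies at the top of the trailing block but at the bottom of the leading block---so the signs cannot simply be transcribed from one identity to the other by symmetry and must be tracked entry by entry; this is where an error is most likely to creep in. The spectral hypotheses $\lambda,\mu\notin\sigma(\mathcal{H}_{[0,i]},\mathcal{J}_{[0,i]})$ for $i=k-1,k$ are used only to guarantee, via \eqref{eqn: components of the eigenvector Phi} and Lemma~\ref{lemma: vector Phi-1}, that the interface components $p_k^{L},p_{k+1}^{L},s_k^{R},s_{k+1}^{R}$ (together with their $s$/$p$ counterparts) are well defined and that the $\mathcal{P}_k,\mathcal{P}_{k+1}$ in the denominators do not vanish at $\lambda$ or $\mu$; the Green's identity itself is a purely algebraic consequence of the eigenvector equations.
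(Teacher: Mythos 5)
Your method is sound and is essentially the paper's own argument in different clothing: your two rank-one ``defect'' relations are exactly the paper's compact equations \eqref{eqn: trailing matrix relation in lemma} and its leading-block analogue, and the paper's trace subtraction is, via $\mathrm{Tr}\bigl(p_{[k+1,n]}^{R}s_{[k+1,n]}^{L}\mathcal{J}_{[k+1,n]}\bigr)=s_{[k+1,n]}^{L}\mathcal{J}_{[k+1,n]}p_{[k+1,n]}^{R}$, the same two evaluations of a sandwiched scalar that you perform directly; your route merely skips the detour through rank-one matrix products. For the trailing block your computation reproduces \eqref{eqn: Wronskian-result for trailing J before mu tends to lambda} exactly.

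The one real issue is the sign you warned yourself about. Carrying your (correct) leading-block defect relations through the two evaluations of $s_{[0,k]}^{L}(\lambda\mathcal{J}_{[0,k]}-\mathcal{H}_{[0,k]})p_{[0,k]}^{R}$ gives
\begin{align*}
(\lambda-\mu)\,s_{[0,k]}^{L}\mathcal{J}_{[0,k]}p_{[0,k]}^{R}
=(b_k-\lambda d_k)\,s_{k}^{L}p_{k+1}^{R}-(\bar{b}_k-\mu d_k)\,s_{k+1}^{L}p_{k}^{R},
\end{align*}
which is the \emph{negative} of the printed right-hand side of \eqref{eqn: Wronskian-result for J-(k+1) before mu tends to lambda}; so your closing claim that rearranging ``gives \eqref{eqn: Wronskian-result for J-(k+1) before mu tends to lambda}'' is not literally what your relations produce. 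Before you blame your bookkeeping: running the paper's own trace computation on its compact relation $\mathcal{H}_{[0,k]}p_{[0,k]}^{R}=z\mathcal{J}_{[0,k]}p_{[0,k]}^{R}-(b_k-zd_k)p_{k+1}^{R}\vec{e}_{k+1}$ yields the same sign as yours, and a one-dimensional sanity check confirms it: for a $1\times1$ leading block, the recurrence \eqref{eqn: rational equation for components} at $m=0$ turns the printed right-hand side into $(\mu-\lambda)c_0s_0^{L}p_0^{R}$ rather than $(\lambda-\mu)c_0s_0^{L}p_0^{R}$. So the printed display appears to carry a sign typo (its two terms are interchanged), your derivation is the correct one, and your writeup should display the identity in the corrected form above and flag the discrepancy --- silently asserting agreement with the printed formula is precisely the entry-by-entry sign slip you predicted at the interface between the two blocks. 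Your reading of the hypotheses (they only guarantee that the interface quantities are well defined and nonvanishing, while the identity itself is pure algebra from the eigenvector equations) matches the paper's remark following the lemma.
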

\begin{proof}
The relations \eqref{eqn: rational equation for components}
for $m=k+1\cdots,n$ can be written as
\begin{align*}
\lefteqn{
z
\left(
  \begin{array}{ccccc}
    c_{k+1} & d_{k+1} & 0 & \cdots & 0 \\
    d_{k+1} & c_{k+2} & d_{k+2} & \cdots & 0 \\
    0 & d_{k+2} & c_{k+3} & \cdots & 0 \\
    \vdots & \vdots & \vdots & \ddots & \vdots \\
    0 & 0 & 0 & \cdots & c_n \\
  \end{array}
\right)
\left(
  \begin{array}{c}
    p_{k+1}^{R} \\
    p_{k+2}^{R} \\
    p_{k+3}^{R} \\
    \vdots \\
    p_{n}^{R} \\
  \end{array}
\right)
}
\\
&&=
\left(
  \begin{array}{ccccc}
    a_{k+1} & b_{k+1} & 0 & \cdots & 0 \\
    \bar{b}_{k+1} & a_{k+2} & d_{k+2} & \cdots & 0 \\
    0 & \bar{b}_{k+2} & a_{k+3} & \cdots & 0 \\
    \vdots & \vdots & \vdots & \ddots & \vdots \\
    0 & 0 & 0 & \cdots & a_n \\
  \end{array}
\right)
\left(
  \begin{array}{c}
    p_{k+1}^{R} \\
    p_{k+2}^{R} \\
    p_{k+3}^{R} \\
    \vdots \\
    p_{n}^{R} \\
  \end{array}
\right)
+
(\bar{b}_k-z d_k)
\left(
  \begin{array}{c}
    p_{k}^{R} \\
    0 \\
    0 \\
    \vdots \\
    0 \\
  \end{array}
\right),
\end{align*}
or in the compact form
\begin{align}
\label{eqn: trailing matrix relation in lemma}
\mathcal{H}_{[k+1,n]}p_{[k+1,n]}^{R}=z\mathcal{J}_{[k+1,n]}p_{[k+1,n]}^{R}-
(\bar{b}_k-z d_k)p_k^{R}\vec{e}_1,
\end{align}
where $\vec{e}_1=(1,0,\cdots,0)\in\mathbb{R}^{n-k}$.
Post-multiplying \eqref{eqn: trailing matrix relation in lemma}
at $z=\lambda$ by $s_{[k+1,n]}^{L}$, we obtain
\begin{align}
\label{eqn: trailing matrix relation in lemma-post}
\mathcal{H}_{[k+1,n]}p_{[k+1,n]}^{R}s_{[k+1,n]}^{L}=
\lambda\mathcal{J}_{[k+1,n]}p_{[k+1,n]}^{R}s_{[k+1,n]}^{L}-
(\bar{b}_k-\lambda d_k)p_k^{R}\vec{e}_1s_{[k+1,n]}^{L},
\end{align}
 while pre-multiplying the conjugate transpose of
\eqref{eqn: trailing matrix relation in lemma}
at $z=\mu$ by $p_{[k+1,n]}^{R}$, we obtain
\begin{align}
\label{eqn: trailing matrix relation in lemma-pre}
p_{[k+1,n]}^{R}s_{[k+1,n]}^{L}\mathcal{H}_{[k+1,n]}=
\mu p_{[k+1,n]}^{R}s_{[k+1,n]}^{L}\mathcal{J}_{n+1}^{(k+1)}-
(b_k-\mu d_k)s_k^{L}p_{[k+1,n]}^{R}\vec{e}^{T}_1.
\end{align}
We proceed with the well-known technique of subtracting
traces of the respective sides of
\eqref{eqn: trailing matrix relation in lemma-post}
and
\eqref{eqn: trailing matrix relation in lemma-pre}.
The left hand side upon subtraction is zero owing to the fact that
$\mathrm{Tr}(A.B)=\mathrm{Tr}(B.A)$ for any well-defined
matrix product.
Consequently, we have
\begin{align*}
(\lambda-\mu)\mathrm{Tr}[p_{[k+1,n]}^{R}s_{[k+1,n]}^{L}\mathcal{J}_{[k+1,n]}]=
(\bar{b}_k-\lambda d_k)p_k^{R}\mathrm{Tr}[\vec{e}_1s_{[k+1,n]}^{L}]-
(b_k-\mu d_k)s_k^{L}\mathrm{Tr}[p_{[k+1,n]}^{R}\vec{e}^{T}_1].
\end{align*}
The left hand side above is equal to the matrix product
$(\lambda-\mu)s_{[k+1,n]}^{L}\mathcal{J}_{[k+1,n]}p_{[k+1,n]}^R$,
while the right hand side can be simplified to
$(\bar{b}_k-\lambda d_k)s_{k+1}^{L}p_k^{R}-
(b_k-\mu d_k)s_k^{L}p_{k+1}^R$
which gives
\eqref{eqn: Wronskian-result for trailing J before mu tends to lambda}.
A similar computation starting from
\begin{align*}
\mathcal{H}_{[0,k]}p_{[0,k]}^{R}=z\mathcal{J}_{[0,k]}p_{[0,k]}^{R}-
(b_k-z d_k)p_{k+1}^{R}\vec{e}_{k+1},
\quad
\vec{e}_{k+1}=(0,0\cdots,1)\in\mathbb{R}^{k+1},
\end{align*}
leads to
\eqref{eqn: Wronskian-result for J-(k+1) before mu tends to lambda}.
\end{proof}
The assumptions in Lemma~\ref{lemma: result from equating trace} are necessary for 
the right hand sides of 
\eqref{eqn: Wronskian-result for trailing J before mu tends to lambda}
and
\eqref{eqn: Wronskian-result for J-(k+1) before mu tends to lambda}
to be non-vanishing.
Later, we will use \eqref{eqn: Wronskian-result for J-(k+1) before mu tends to lambda}
to make an observation regarding the positive-definiteness of $\mathcal{J}_{[0,k]}$.
Before that we solve the stated inverse problem
GIEP~\ref{giep: GIEP}.
\section{Solution to the GIEP}
\label{sec: solution to the giep}
The given data in GIEP~\ref{giep: GIEP} suggest
that we write the equation
$(\lambda\mathcal{J}_{[0,n]}-\mathcal{H}_{[0,n]})p_{[0,n]}^{R}=0$
in the form
\begin{align}
\label{eqn: eigenvalue relation suggested by data}
\left(
  \begin{array}{cc}
    \lambda\mathcal{J}_{[0,k]}-\mathcal{H}_{[0,k]} & \mathcal{O}_{\lambda} \\
    \mathcal{O}_{\lambda}^{\ast} & \lambda\mathcal{J}_{[k+1,n]}-\mathcal{H}_{[k+1,n]} \\
  \end{array}
\right)
\left(
  \begin{array}{c}
    p_{[0,k]}^{R} \\
    p_{[k+1,n]}^{R} \\
  \end{array}
\right)
=
\left(
  \begin{array}{c}
    0 \\
    0 \\
  \end{array}
\right),
\end{align}
where
\begin{align}
\label{eqn: O-lambda-matrix}
\mathcal{O}_{\lambda}=
\left(
\begin{array}{cccc}
0 & 0 & \cdots & 0 \\
0 & 0 & \cdots & 0 \\
\vdots & \vdots & \ddots & \vdots \\
\lambda d_k-b_k & 0 & \cdots & 0 \\
\end{array}
\right).
\end{align}
Pre-multiplying \eqref{eqn: eigenvalue relation suggested by data}
by $s_{[0,n]}^{L}$ gives the relation
\begin{align}
\label{eqn: pre-multiplying gives the relation in lemma}
\lefteqn{s_{[0,k]}^{L}\mathcal{H}_{[0,k]}p_{[0,k]}^{R}+
s_{[k+1,n]}^{L}\mathcal{H}_{[k+1,n]}p_{[k+1,n]}^{R}-
s_{[0,k]}^{L}\mathcal{O}_{\lambda}p_{[k+1,n]}^{R}-
s_{[k+1,n]}^{L}\mathcal{O}_{\lambda}^{\ast}p_{[0,k]}^{R}
}
\nonumber\\
&&=
\lambda(s_{[0,k]}^{L}\mathcal{J}_{[0,k]}p_{[0,k]}^{R}+
s_{[k+1,n]}^{L}\mathcal{J}_{[k+1,n]}p_{[k+1,n]}^{R}).
\end{align}
Similarly, from $s_{[0,n]}^{L}(\mu\mathcal{J}_{[0,n]}-\mathcal{H}_{[0,n]})=0$,
we obtain
\begin{align*}
\lefteqn{
s_{[0,k]}^{L}\mathcal{H}_{[0,k]}p_{[0,k]}^{R}+
s_{[k+1,n]}^{L}\mathcal{H}_{[k+1,n]}p_{[k+1,n]}^{R}-
s_{[0,k]}^{L}\mathcal{O}_{\mu}p_{[k+1,n]}^{R}-
s_{[k+1,n]}^{L}\mathcal{O}_{\mu}^{\ast}p_{[0,k]}^{R}
}
\\
&&=
\mu(s_{[0,k]}^{L}\mathcal{J}_{[0,k]}p_{[0,k]}^{R}+
s_{[k+1,n]}^{L}\mathcal{J}_{[k+1,n]}p_{[k+1,n]}^{R}),
\end{align*}
which used with
\eqref{eqn: pre-multiplying gives the relation in lemma}
to eliminate
$\mathcal{H}_{[0,n]}$ and $\mathcal{H}_{[k+1,n]}$
gives
\begin{align*}
\lefteqn{
s_{[0,k]}^{L}[\mathcal{O}_{\mu}-\mathcal{O}_{\lambda}]p_{[k+1,n]}^{R}+
s_{[k+1,n]}^{L}[\mathcal{O}_{\mu}^{\ast}-\mathcal{O}_{\lambda}^{\ast}]p_{[0,k]}^{R}
}
\\
&&=
(\lambda-\mu)(s_{[0,k]}^{L}\mathcal{J}_{[0,k]}p_{[0,k]}^{R}+
s_{[k+1,n]}^{L}\mathcal{J}_{[k+1,n]}p_{[k+1,n]}^{R}).
\end{align*}
The left hand side can be further simplified to finally obtain
\begin{align*}
s_{[0,k]}^{L}\mathcal{J}_{[0,k]}p_{[0,k]}^{R}+
s_{[k+1,n]}^{L}\mathcal{J}_{[k+1,n]}p_{[k+1,n]}^{R}+
d_{k}(s_{k+1}^{L}p_k^{R}+s_{k}^{L}p_{k+1}^{R})=0,
\end{align*}
which, as $\mu\rightarrow\lambda$,
implies
\begin{align}
\label{eqn: minus determinant expression}
d_k
\left|
  \begin{array}{cc}
    p_{k}^{L} & -p_{k}^{R} \\
    p_{k+1}^{L} & p_{k+1}^{L} \\
  \end{array}
\right|
=-
\left[
(p_{[0,k]}^{L})^{\ast}\mathcal{J}_{[0,k]}p_{[0,k]}^{R}+
(p_{[k+1,n]}^{L})^{\ast}\mathcal{J}_{[k+1,n]}p_{[k+1,n]}^{R}
\right].
\end{align}
The next lemma provides a crucial characterization of
the pole $\alpha_k$, that is, it should not be a real number
if the entry $b_k$ is to be determined uniquely.
\begin{lemma}
\label{lemma: determinant is non-zero}
Suppose $\alpha_k\notin\mathbb{R}$. Then
\begin{align}
\label{eqn: determinant is non-zero}
\Delta_k=
p_{k+1}^{L}p_k^{R}
\left|
  \begin{array}{cc}
    s_{k}^{L} & s_k^{R}\\
    s_{k+1}^{L} & s_{k+1}^{R} \\
  \end{array}
\right|
-
s_{k+1}^{L}s_k^{R}
\left|
  \begin{array}{cc}
    p_{k}^{L} & p_k^{R}\\
    p_{k+1}^{L} & p_{k+1}^{R} \\
  \end{array}
\right|\neq0,
\end{align}
if $\lambda,\mu\notin\sigma(\mathcal{H}_{[0,i]},\mathcal{J}_{[0,i]})$ for
$i=k-1,k$.
\end{lemma}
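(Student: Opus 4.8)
The plan is to turn the abstract determinant $\Delta_k$ into a single explicit scalar whose non-vanishing can be read off directly. First I would expand the two $2\times2$ determinants in \eqref{eqn: determinant is non-zero} and collect the four resulting monomials. Two of them, namely $-\,p_{k+1}^L p_k^R s_k^R s_{k+1}^L$ arising from the first product and $+\,s_{k+1}^L s_k^R p_k^R p_{k+1}^L$ arising from the second, coincide up to sign and cancel. This leaves the reduced form
\[
\Delta_k = p_k^R\,p_{k+1}^L\,s_k^L\,s_{k+1}^R - p_k^L\,p_{k+1}^R\,s_k^R\,s_{k+1}^L,
\]
and this cancellation is what makes the rest routine.

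Next I would substitute the closed forms of the components, using \eqref{eqn: components of the eigenvector Phi} for the right components (evaluated at $z=\lambda$ for the $p$'s and at $z=\mu$ for the $s$'s) and \eqref{eqn: components of the left eigenvector Psi} for the left ones. Factoring out $p_k^R p_k^L s_k^R s_k^L$ and inserting the ratios $p_{k+1}^R/p_k^R=\mathcal{P}_{k+1}(\lambda)/[(b_k-\lambda d_k)\mathcal{P}_k(\lambda)]$ and $p_{k+1}^L/p_k^L=\mathcal{P}_{k+1}(\lambda)/[(\bar b_k-\lambda d_k)\mathcal{P}_k(\lambda)]$, together with their $\mu$-analogues for the $s$'s, the common factor $\mathcal{P}_{k+1}(\lambda)\mathcal{P}_{k+1}(\mu)/[\mathcal{P}_k(\lambda)\mathcal{P}_k(\mu)]$ comes out and $\Delta_k$ reduces to this factor times
\[
\frac{1}{(\bar b_k-\lambda d_k)(b_k-\mu d_k)} - \frac{1}{(b_k-\lambda d_k)(\bar b_k-\mu d_k)}.
\]

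I would then put this bracket over a common denominator. Its numerator expands to $d_k(\bar b_k-b_k)(\mu-\lambda)$, and since $\lambda,\mu,d_k$ are real the denominator is $|b_k-\lambda d_k|^2|b_k-\mu d_k|^2>0$; writing $\bar b_k-b_k=-2i\,\mathrm{Im}(b_k)$ and using the Hermitian relations $p_k^L=\overline{p_k^R}$, $s_k^L=\overline{s_k^R}$ to recognise the prefactor as $|p_k^R(\lambda)|^2|s_k^R(\mu)|^2$, I arrive at
\[
\Delta_k = |p_k^R(\lambda)|^2\,|s_k^R(\mu)|^2\,\frac{\mathcal{P}_{k+1}(\lambda)\mathcal{P}_{k+1}(\mu)}{\mathcal{P}_k(\lambda)\mathcal{P}_k(\mu)}\,\frac{2i\,d_k\,\mathrm{Im}(b_k)\,(\lambda-\mu)}{|b_k-\lambda d_k|^2\,|b_k-\mu d_k|^2}.
\]
Non-vanishing is now immediate. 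The identities $\mathcal{P}_{k+1}(z)=\det(z\mathcal{J}_{[0,k]}-\mathcal{H}_{[0,k]})$ and $\mathcal{P}_k(z)=\det(z\mathcal{J}_{[0,k-1]}-\mathcal{H}_{[0,k-1]})$ show that the hypothesis $\lambda,\mu\notin\sigma(\mathcal{H}_{[0,i]},\mathcal{J}_{[0,i]})$ for $i=k-1,k$ forces all four $\mathcal{P}$-values to be nonzero (which in turn gives $p_k^R(\lambda),s_k^R(\mu)\neq0$); $d_k\neq0$ by hypothesis and $\lambda\neq\mu$ are the two distinct prescribed eigenvalues; and the remaining factor $\mathrm{Im}(b_k)$ is nonzero precisely because, as noted before the lemma, the pole of $p_{k+1}^R$ sits at $\alpha_k=b_k/d_k$ with $d_k\in\mathbb{R}\setminus\{0\}$, so $\alpha_k\notin\mathbb{R}$ is equivalent to $\mathrm{Im}(b_k)\neq0$.

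The main obstacle is organisational rather than conceptual: one must track the $\lambda$- versus $\mu$-evaluations and the conjugated denominators $\bar b_j-zd_j$ against $b_j-zd_j$ through the substitution, and verify both the cross-term cancellation and the sign of the $2i\,\mathrm{Im}(b_k)$ factor. The genuine content of the lemma is the identification $\alpha_k=b_k/d_k$, which converts the geometric hypothesis that the prescribed pole $\alpha_k$ is non-real into the algebraic statement that $b_k$ has nonzero imaginary part; this is exactly the non-degeneracy that will subsequently permit $b_k$ to be solved for uniquely.
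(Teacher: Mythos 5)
Your proof is correct and takes essentially the same route as the paper: both expand $\Delta_k$, substitute closed forms of the components, and simplify over a common denominator to a product of manifestly nonzero factors times $2i(\lambda-\mu)$ times the imaginary part of the pole, with the hypothesis $\lambda,\mu\notin\sigma(\mathcal{H}_{[0,i]},\mathcal{J}_{[0,i]})$, $i=k-1,k$, guaranteeing the non-vanishing of the numerator values exactly as in the paper. The only difference is parametrization: the paper substitutes the prescribed form \eqref{eqn: given form of rational functions} (numerators $\mathcal{T}_m$, poles $\alpha_j$), so $\operatorname{Im}\alpha_k\neq0$ enters directly, whereas you substitute \eqref{eqn: components of the eigenvector Phi}--\eqref{eqn: components of the left eigenvector Psi} with $b_k$ treated symbolically and then translate the hypothesis through the identification $\alpha_k=b_k/d_k$, which the paper itself records.
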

\begin{proof}
Using the forms as suggested by
\eqref{eqn: given form of rational functions},
we first note that
\begin{align*}
\left|
  \begin{array}{cc}
    p_k^{L} & p_k^{R} \\
    p_{k+1}^{L} & p_{k+1}^{R} \\
  \end{array}
\right|=
-\frac{\mathcal{T}_k(\lambda)\mathcal{T}_{k+1}(\lambda)}
{\eta_k^{\lambda}\eta_{k+1}^{\lambda}
\prod_{t=0}^{k}|\alpha_t-\lambda|^2}
2i\rm{Im}\alpha_k\neq0,
\end{align*}
by given assumptions and where $\rm{Im}\alpha_k$
is the imaginary part of $\alpha_k$.
Then,
\begin{align*}
\lefteqn{
\Delta_k=
\frac{\mathcal{T}_k(\lambda)\mathcal{T}_k(\mu)
\mathcal{T}_{k+1}(\lambda)\mathcal{T}_k(\mu)}
{\eta_k^{\lambda}\eta_{k}^{\mu}\eta_{k+1}^{\lambda}\eta_{k+1}^{\mu}
\prod_{t=0}^{k-1}|\alpha_t-\lambda|^2|\alpha_t-\mu|^2}
}
\\
&&\times
\left[
\frac{1}{|\alpha_k-\lambda|^2(\overline{\alpha_k}-\mu)}-
\frac{1}{|\alpha_k-\mu|^2(\overline{\alpha_k}-\lambda)}
\right]
2i\rm{Im}{\alpha_k},
\end{align*}
which simplifies further to give
\begin{align*}
\Delta_k=
\frac{\mathcal{T}_k(\lambda)\mathcal{T}_k(\mu)
\mathcal{T}_{k+1}(\lambda)\mathcal{T}_k(\mu)}
{
\eta_k^{\lambda}\eta_{k}^{\mu}\eta_{k+1}^{\lambda}\eta_{k+1}^{\mu}
\prod_{j=0}^{k}|\alpha_j-\lambda|^2|\alpha_j-\mu|^2}
2i(\lambda-\mu)\rm{Im}\alpha_k.
\end{align*}
Since $\alpha_k\notin\mathbb{R}$, we have that $\Delta_k$ is not zero.
\end{proof}
Now, with $\Delta_k$ a non-zero purely imaginary number
or equivalently,
$i\Delta_k$ a non-vanishing real number, we proceed to show that
$b_k$ can be determined uniquely.
\begin{theorem}
\label{theorem: theorem for solution of b-k}
Suppose that the given spectral point points
$\lambda,\mu\notin\sigma(\mathcal{H}_{[0,i]}, \mathcal{J}_{[0,i]})$
for $i=k-1,k$.
If $\alpha_k\notin\mathbb{R}$, then
\begin{align}
\label{eqn: b-k final expression after solution}
b_{k}=(\lambda+\mu)d_k+
\frac{d_k}{\Delta_{k}}
\left[
\mu s_{k+1}^{L}s_{k}^{R}
\left|
  \begin{array}{cc}
    p_{k}^{L} & p_k^{R} \\
    p_{k+1}^{L} & p_{k+1}^{R} \\
  \end{array}
\right|-
\lambda p_{k+1}^{L}p_{k}^{R}
\left|
  \begin{array}{cc}
    s_{k}^{L} & s_k^{R} \\
    s_{k+1}^{L} & s_{k+1}^{R} \\
  \end{array}
\right|
\right],
\end{align}
where $\Delta_k$ is given by
\eqref{eqn: determinant is non-zero}.
\end{theorem}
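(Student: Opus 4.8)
The plan is to reduce the problem to a $2\times2$ linear system for the pair $(b_k,\bar b_k)$ whose coefficient determinant is precisely $\Delta_k$, and then to invert it. The two equations I would use are the \emph{coincident-eigenvalue} companions of Lemma~\ref{lemma: result from equating trace}: one attached to $\lambda$ (built from the $p$-vectors) and one attached to $\mu$ (built from the $s$-vectors). Explicitly, I would first establish
\begin{align*}
b_k\,p_k^{L}p_{k+1}^{R}-\bar b_k\,p_k^{R}p_{k+1}^{L}
&=\lambda d_k
\left|
  \begin{array}{cc}
    p_k^{L} & p_k^{R} \\
    p_{k+1}^{L} & p_{k+1}^{R} \\
  \end{array}
\right|,
\\
b_k\,s_k^{L}s_{k+1}^{R}-\bar b_k\,s_k^{R}s_{k+1}^{L}
&=\mu d_k
\left|
  \begin{array}{cc}
    s_k^{L} & s_k^{R} \\
    s_{k+1}^{L} & s_{k+1}^{R} \\
  \end{array}
\right|,
\end{align*}
every coefficient of which is fixed by the prescribed data (the components at indices $k,k+1$ together with $\lambda,\mu,d_k$).

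To derive the first identity I would take the trailing relation \eqref{eqn: trailing matrix relation in lemma} at $z=\lambda$ and multiply it on the left by the row vector $(p_{[k+1,n]}^{R})^{\ast}$, whose entries are the $p_j^{L}$. Since $\mathcal{H}_{[k+1,n]}$ is Hermitian and $\mathcal{J}_{[k+1,n]}$ is real symmetric, the two quadratic forms $(p_{[k+1,n]}^{R})^{\ast}\mathcal{H}_{[k+1,n]}p_{[k+1,n]}^{R}$ and $(p_{[k+1,n]}^{R})^{\ast}\mathcal{J}_{[k+1,n]}p_{[k+1,n]}^{R}$ are real, so subtracting the resulting scalar equation from its complex conjugate cancels them and leaves only the boundary term, which is exactly the first identity (the second follows in the same way at $z=\mu$ with the $s$-vectors, or from the leading-block relation). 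Equivalently, these are \eqref{eqn: Wronskian-result for trailing J before mu tends to lambda} read in the degenerate case where the two eigen-data coincide, so that the left-hand side vanishes. The key point is then that, expanding both the $2\times2$ coefficient determinant of this system and the definition \eqref{eqn: determinant is non-zero} of $\Delta_k$, and cancelling the term $p_{k+1}^{L}p_k^{R}s_{k+1}^{L}s_k^{R}$ that appears twice with opposite signs in the latter, one finds that the coefficient determinant equals $\Delta_k$. By Lemma~\ref{lemma: determinant is non-zero} the hypothesis $\alpha_k\notin\mathbb{R}$ forces $\Delta_k\neq0$, so the system is uniquely solvable.

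Finally I would solve for $b_k$ by Cramer's rule, obtaining
\begin{align*}
b_k=\frac{d_k}{\Delta_k}
\left[
\mu\,p_{k+1}^{L}p_k^{R}
\left|
  \begin{array}{cc}
    s_k^{L} & s_k^{R} \\
    s_{k+1}^{L} & s_{k+1}^{R} \\
  \end{array}
\right|
-\lambda\,s_{k+1}^{L}s_k^{R}
\left|
  \begin{array}{cc}
    p_k^{L} & p_k^{R} \\
    p_{k+1}^{L} & p_{k+1}^{R} \\
  \end{array}
\right|
\right],
\end{align*}
and then pass to the stated form \eqref{eqn: b-k final expression after solution} by adding and subtracting $(\lambda+\mu)d_k$, using the elementary identity $p_{k+1}^{L}p_k^{R}\,W_s-s_{k+1}^{L}s_k^{R}\,W_p=\Delta_k$ (where $W_p$ and $W_s$ are the two $2\times2$ determinants just displayed) to collect the multiple of $(\lambda+\mu)d_k$. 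The one genuine obstacle I anticipate lies at the very start: one must \emph{not} use the mixed relations \eqref{eqn: Wronskian-result for trailing J before mu tends to lambda}--\eqref{eqn: Wronskian-result for J-(k+1) before mu tends to lambda} at distinct $\lambda\neq\mu$, because their coefficient determinant is $p_k^{L}p_k^{R}s_{k+1}^{L}s_{k+1}^{R}-p_{k+1}^{L}p_{k+1}^{R}s_k^{L}s_k^{R}$, which is \emph{not} $\Delta_k$ and is not governed by the condition $\alpha_k\notin\mathbb{R}$. Once the coincident-eigenvalue pairing is chosen, the determinant is $\Delta_k$ and the remainder is routine linear algebra.
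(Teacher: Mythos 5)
Your proof is correct, and it reaches the paper's own pivotal $2\times2$ system, but by a genuinely different derivation. The paper obtains the two equations
\eqref{eqn: simultaneous system of eqn--1} and \eqref{eqn: simultaneous system of eqn--2} componentwise: it eliminates the diagonal terms $(\lambda c_m-a_m)$ between the right- and left-eigenvector recurrences \eqref{eqn: solution eqn for p-R} and \eqref{eqn: solution eqn for p-L}, telescopes the resulting identity from $m=k+1$ to $m=n-1$, and then invokes the boundary relation \eqref{eqn: solution equation from last two equations} at the last row to kill the $b_{n-1}$ terms. You instead get each equation in one stroke: sandwiching the trailing-block relation \eqref{eqn: trailing matrix relation in lemma} at $z=\lambda$ with $(p_{[k+1,n]}^{R})^{\ast}$ and subtracting the complex conjugate, so that the reality of the quadratic forms $(p_{[k+1,n]}^{R})^{\ast}\mathcal{H}_{[k+1,n]}p_{[k+1,n]}^{R}$ and $(p_{[k+1,n]}^{R})^{\ast}\mathcal{J}_{[k+1,n]}p_{[k+1,n]}^{R}$ (together with $\lambda\in\mathbb{R}$ and $p_j^{L}=\overline{p_j^{R}}$ at $\lambda$) cancels everything except the boundary term --- equivalently, reading \eqref{eqn: Wronskian-result for trailing J before mu tends to lambda} in the coincident case $s=p$, $\mu=\lambda$, where the left-hand side vanishes. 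This is cleaner in that the unknown entries $b_j$, $j>k$, disappear automatically inside the real quadratic form rather than by explicit telescoping, and it makes the logical dependence on Lemma~\ref{lemma: result from equating trace} transparent; the paper's telescoping version has the compensating advantage that starting the sum at $m=j$ immediately yields the analogous system for every $b_j$, $j=k+1,\dots,n-1$, which the paper exploits right after the theorem. Your endgame --- identifying the coefficient determinant with $\Delta_k$ of \eqref{eqn: determinant is non-zero} after cancelling the doubled cross term, invoking Lemma~\ref{lemma: determinant is non-zero} via $\alpha_k\notin\mathbb{R}$, and applying Cramer's rule --- coincides with the paper's, and your rewriting of the Cramer expression into \eqref{eqn: b-k final expression after solution} by adding and subtracting $(\lambda+\mu)d_k$ checks out. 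Your closing caveat about not pairing the mixed relations \eqref{eqn: Wronskian-result for trailing J before mu tends to lambda}--\eqref{eqn: Wronskian-result for J-(k+1) before mu tends to lambda} at $\lambda\neq\mu$ is also well taken (their determinant is indeed $p_k^{L}p_k^{R}s_{k+1}^{L}s_{k+1}^{R}-p_{k+1}^{L}p_{k+1}^{R}s_k^{L}s_k^{R}$, not $\Delta_k$; moreover \eqref{eqn: Wronskian-result for J-(k+1) before mu tends to lambda} involves the still-unknown components $p_j^{R}$, $j<k$, so it could not serve as data anyway).
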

\begin{proof}
We have from 
\eqref{eqn: rational equation for components}
\begin{align}
\label{eqn: solution eqn for p-R}
(\lambda d_{m-1}-\bar{b}_{m-1})p_{m-1}^{R}+
(\lambda c_m-a_m)p_m^{R}+
(\lambda d_m-b_m)p_{m+1}^{R}
&=0,
\nonumber\\
(\lambda d_{n-1}-\bar{b}_{n-1})p_{n-1}^{R}+(\lambda c_n-a_n)p_n^{R}
&=0,
\end{align}
and the corresponding equations
for the components of $p_{[0,k]}^{L}$
\begin{align}
\label{eqn: solution eqn for p-L}
(\lambda d_{m-1}-b_{m-1})p_{m-1}^{L}+
(\lambda c_m-a_m)p_m^{L}+
(\lambda d_m-\bar{b}_m)p_{m+1}^{L}
&=0,
\nonumber\\
(\lambda d_{n-1}-b_{n-1})p_{n-1}^{L}+(\lambda c_n-a_n)p_n^{L}
&=0.
\end{align}
Eliminating $p_m^{R}$ and $p_m^{L}$ between the first equations of
\eqref{eqn: solution eqn for p-R} and \eqref{eqn: solution eqn for p-L}
we obtain
\begin{align*}
\lefteqn{
(p_{m-1}^{L}p_{m}^{R}b_{m-1}-
p_{m}^{L}p_{m+1}^{R}b_{m})+
(p_{m+1}^{L}p_{m}^{R}\bar{b}_{m}-
p_{m}^{L}p_{m-1}^{R}\bar{b}_{m-1})
}
\nonumber\\
&&=
\lambda d_{m-1}(p_{m-1}^{L}p_{m}^{R}-p_{m}^{L}p_{m-1}^{R})+
\lambda d_{m}(p_{m+1}^{L}p_{m}^{R}-p_{m}^{L}p_{m+1}^{R}),
\end{align*}
which on summing respective sides
from $m=k+1$ to $m=n-1$,
gives
\begin{align}
\label{eqn: solution eqn after summing}
\lefteqn{
(p_{k}^{L}p_{k+1}^{R}b_{k}-p_{n-1}^{L}p_{n}^{R}b_{n-1})+
(p_{n}^{L}p_{n-1}^{R}\bar{b}_{n-1}-p_{k+1}^{L}p_{k}^{R}\bar{b}_{k})
}
\nonumber\\
&&=
\lambda d_{k}(p_k^{L}p_{k+1}^{R}-p_{k+1}^{L}p_{k}^{R})
+
\lambda d_{n-1}(p_{n}^{L}p_{n-1}^{R}-p_{n-1}^{L}p_{n}^{R}).
\end{align}
From the last two relations of
\eqref{eqn: solution eqn for p-R} and
\eqref{eqn: solution eqn for p-L},
we get
\begin{align}
\label{eqn: solution equation from last two equations}
p_{n-1}^{L}p_{n}^{R}b_{n-1}-p_{n}^{L}p_{n-1}^{R}\bar{b}_{n-1}=
\lambda d_{n-1}(p_{n-1}^{L}p_{n}^{R}-p_{n}^{L}p_{n-1}^{R}),
\end{align}
which when added to
\eqref{eqn: solution eqn after summing}
yields
\begin{align}
\label{eqn: simultaneous system of eqn--1}
p_{k}^{L}p_{k+1}^{R}b_{k}-p_{k+1}^{L}p_{k}^{R}\bar{b}_{k}=
\lambda d_k(p_{k}^{L}p_{k+1}^{R}-p_{k+1}^{L}p_{k}^{R}).
\end{align}
A computation similar to the relations
\eqref{eqn: solution eqn for p-R},
\eqref{eqn: solution eqn for p-L}
and
\eqref{eqn: solution eqn after summing}
for the eigenpair $(\mu, s_{[o,n]}^{R})$ gives
\begin{align}
\label{eqn: simultaneous system of eqn--2}
s_{k}^{L}s_{k+1}^{R}b_{k}-s_{k+1}^{L}s_{k}^{R}\bar{b}_{k}=
\mu d_k(s_{k}^{L}s_{k+1}^{R}-s_{k+1}^{L}s_{k}^{R}).
\end{align}
We solve the system of equations
\eqref{eqn: simultaneous system of eqn--1}
and
\eqref{eqn: simultaneous system of eqn--2}
for $b_k$ and $\bar{b}_k$.
First, the determinant of the system is
\begin{align*}
\Delta_{k}
&=
p_{k+1}^{L}p_{k}^{R}s_{k}^{L}s_{k+1}^{R}-
p_{k}^{L}p_{k+1}^{R}s_{k+1}^{L}s_{k}^{R}
=
\left|
  \begin{array}{cc}
    s_{k}^{L}p_{k+1}^{L} & s_k^{R}p_{k+1}^{R}\\
    p_k^{L}s_{k+1}^{L} & p_{k}^{R}s_{k+1}^{R} \\
  \end{array}
\right|
\\
&=
p_{k+1}^{L}p_k^{R}
\left|
  \begin{array}{cc}
    s_{k}^{L} & s_k^{R}\\
    s_{k+1}^{L} & s_{k+1}^{R} \\
  \end{array}
\right|
-
s_{k+1}^{L}s_k^{R}
\left|
  \begin{array}{cc}
    p_{k}^{L} & p_k^{R}\\
    p_{k+1}^{L} & p_{k+1}^{R} \\
  \end{array}
\right|,
\end{align*}
which by Lemma~\ref{lemma: determinant is non-zero} is non-zero.
It is now a matter of computation to obtain
\begin{align*}
\lefteqn{
\Delta_{k}b_{k}=
\lambda d_kp_{k+1}^{L}p_{k}^{R}s_{k+1}^{L}s_{k}^{R}+
\mu d_ks_{k}^{L}s_{k+1}^{R}p_{k+1}^{L}p_{k}^{R}
}
\nonumber\\
&&-
\lambda d_kp_k^{L}p_{k+1}^{R}s_{k+1}^{L}s_{k}^{R}-
\mu d_ks_{k+1}^{L}s_{k}^{R}p_{k+1}^{L}p_{k}^{R},
\end{align*}
which can be further simplified to obtain
\begin{align*}
\Delta_k b_k=(\lambda+\mu)d_k\Delta_k+
d_k[\lambda p_{k+1}^{L}p_{k}^{R}(s_{k+1}^{L}s_k^{R}-s_{k}^{L}s_{k+1}^{R})-
\mu s_{k+1}^{L}s_{k}^{R}(p_{k+1}^{L}p_k^{R}-p_k^{L}p_{k+1}^{R})],
\end{align*}
leading to \eqref{eqn: b-k final expression after solution}
and specifying the entry $b_k$ uniquely.
\end{proof}
A similar computation for $\bar{b}_{k}$ gives
\begin{align*}
\bar{b}_{k}=(\lambda+\mu)d_k+
\frac{d_k}{\Delta_{k}}
\left[
\mu s_{k}^{L}s_{k+1}^{R}
\left|
  \begin{array}{cc}
    p_{k}^{L} & p_k^{R} \\
    p_{k+1}^{L} & p_{k+1}^{R} \\
  \end{array}
\right|-
\lambda p_{k}^{L}p_{k+1}^{R}
\left|
  \begin{array}{cc}
    s_{k}^{L} & s_k^{R} \\
    s_{k+1}^{L} & s_{k+1}^{R} \\
  \end{array}
\right|
\right].
\end{align*}
Theorem~\ref{theorem: theorem for solution of b-k}
finds the unique expression for $b_k$.
Summing \eqref{eqn: solution eqn after summing} from
$m=j$ to $m=n-1$ for each $j=k+1,\cdots,n-1$ yields an expression similar to
\eqref{eqn: b-k final expression after solution} for each $b_j$,
$j=k+1,\cdots,n-2$.
The entry $b_{n-1}$ is found from the system of equations consisting of
\eqref{eqn: solution equation from last two equations}
and the equivalent equation in $\mu$.
The assumptions are
$\alpha_j\notin\mathbb{R}$ and
$\lambda,\mu\notin\sigma(\mathcal{H}_{[0,j]},\mathcal{J}_{[0,j]})$,
$j=k+1,\cdots,n-1$.
Thus, with $b_j$, $j=k,k+1,\cdots,n-1$ determined, the $a_j's$
are found using
\eqref{eqn: rational equation for components}
as
\begin{align}
\label{eqn: entries a-j-in terms of b-j}
a_i=
\left\{
  \begin{array}{ll}
    \lambda c_i+
\frac{
(\lambda d_{i-1}-\bar{b}_{i-1})p_{i-1}^{R}(\lambda)+
(\lambda d_i-b_i)p_{i+1}^{R}(\lambda)}
{p_i^{R}(\lambda)}, & \hbox{$i=k+1,k+2\cdots n-1$;} \\
    \lambda c_n+\frac{(\lambda d_{n-1}-\bar{b}_{n-1})p_{n-1}^{R}(\lambda)}
{p_n^{R}(\lambda)}, & \hbox{$i=n$.}
  \end{array}
\right.
\end{align}
This completes the reconstruction of the matrix $\mathcal{H}_{[0,n]}$.
\begin{remark}
Since $\lambda$ and $\mu$ are zeros of $\mathcal{P}_{n+1}(z)$, the assumptions
for the determination of the matrix $\mathcal{H}_{[0,n]}$ requires that
$\mathcal{P}_j(z)$, $j=k-1,k,\cdots,n$, do not vanish at $\lambda$ and $\mu$.
However, we emphasize that the determination of each entry $b_j$ requires that
$\mathcal{P}_{j-1}(z)$ and $\mathcal{P}_j(z)$ do not share a common zero at
$\lambda$ and $\mu$.
This condition is often implicit, both in the direct and inverse problems, in the form of
the requirement that the zeros of
$\mathcal{P}_{j-1}(z)$ and $\mathcal{P}_j(z)$ or equivalently, the eigenvalues of the
corresponding pencil matrices satisfy a separation property known as interlacing.
\end{remark}
\begin{corollary}
\label{corollary: conditions for b-k to be imaginary}
 For $j=k,k+1,\cdots,n-1$, $b_j$ is purely imaginary and equals $ih_j$ if
\begin{align}
\begin{split}
\label{eqn: conditions for b-k to be imaginary}
\left|
  \begin{array}{cc}
    p_{j}^{L} & p_{j}^{R} \\
    p_{j+1}^{L} & p_{j+1}^{R} \\
  \end{array}
\right|^2
&=
i\frac{2\mu h_j\Delta_j}{\lambda(\lambda-\mu)d_j}
\frac{
\left|
  \begin{array}{cc}
    p_{j}^{L} & -p_{j}^{R} \\
    p_{j+1}^{L} & p_{j+1}^{R} \\
  \end{array}
\right|
}
{
\left|
  \begin{array}{cc}
    s_{j}^{L} & -s_{j}^{R} \\
    s_{j+1}^{L} & s_{j+1}^{R} \\
  \end{array}
\right|
},
\\
\left|
  \begin{array}{cc}
    s_{j}^{L} & s_{j}^{R} \\
    s_{j+1}^{L} & s_{j+1}^{R} \\
  \end{array}
\right|^2
&=
i\frac{2\lambda h_j\Delta_j}{\mu(\lambda-\mu)d_j}
\frac{
\left|
  \begin{array}{cc}
    s_{j}^{L} & -s_{j}^{R} \\
    s_{j+1}^{L} & s_{j+1}^{R} \\
  \end{array}
\right|
}
{
\left|
  \begin{array}{cc}
    p_{j}^{L} & -p_{j}^{R} \\
    p_{j+1}^{L} & p_{j+1}^{R} \\
  \end{array}
\right|.
}
\end{split}
\end{align}
\end{corollary}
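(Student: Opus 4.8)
The plan is to extract the purely-imaginary condition directly from the linear system that already produced $b_k$ in the proof of Theorem~\ref{theorem: theorem for solution of b-k}, and then to convert it into \eqref{eqn: conditions for b-k to be imaginary}. Summing \eqref{eqn: solution eqn after summing} from $m=j$ to $m=n-1$ gives, for each $j=k,\dots,n-1$, a pair of equations of exactly the shape \eqref{eqn: simultaneous system of eqn--1}--\eqref{eqn: simultaneous system of eqn--2}, so it suffices to argue for $j=k$ and then replace $k$ by $j$ throughout. Throughout I write $D_p,D_s$ for the two full $2\times2$ determinants occurring in \eqref{eqn: determinant is non-zero}, and $E_p,E_s$ for the determinants obtained from them by negating $p_k^{R}$ and $s_k^{R}$; thus $E_p=p_k^{L}p_{k+1}^{R}+p_{k+1}^{L}p_k^{R}$ and similarly for $E_s$.

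The first step is to observe that $b_k=ih_k$ is equivalent to $\bar b_k=-b_k$, and to substitute this into \eqref{eqn: simultaneous system of eqn--1} and \eqref{eqn: simultaneous system of eqn--2}. Each left-hand side then collapses, the coefficient of $b_k$ becoming precisely $E_p$ and $E_s$, and I obtain the clean pair
\begin{align*}
ih_kE_p=\lambda d_kD_p,\qquad ih_kE_s=\mu d_kD_s,
\end{align*}
which solve at once for $D_p=ih_kE_p/(\lambda d_k)$ and $D_s=ih_kE_s/(\mu d_k)$.

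The key step is to recompute $\Delta_k$ with these values. Writing the off-diagonal products purely algebraically as $p_{k+1}^{L}p_k^{R}=\tfrac12(E_p-D_p)$ and $s_{k+1}^{L}s_k^{R}=\tfrac12(E_s-D_s)$ and inserting them into the defining expression $\Delta_k=p_{k+1}^{L}p_k^{R}D_s-s_{k+1}^{L}s_k^{R}D_p$ from Lemma~\ref{lemma: determinant is non-zero}, the $D_pD_s$ terms cancel and leave $\Delta_k=\tfrac12(E_pD_s-E_sD_p)$; substituting the solved $D_p,D_s$ then yields
\begin{align*}
\Delta_k=\frac{ih_kE_pE_s(\lambda-\mu)}{2\lambda\mu d_k}.
\end{align*}
Feeding this single value of $\Delta_k$ into the right-hand sides of \eqref{eqn: conditions for b-k to be imaginary} reduces them to $D_p^2=-h_k^2E_p^2/(\lambda^2d_k^2)$ and $D_s^2=-h_k^2E_s^2/(\mu^2d_k^2)$, which are exactly the squares of the two relations $ih_kE_p=\lambda d_kD_p$ and $ih_kE_s=\mu d_kD_s$; this establishes \eqref{eqn: conditions for b-k to be imaginary}. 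Conversely, multiplying the two identities of \eqref{eqn: conditions for b-k to be imaginary} recovers the product $D_pD_s$ while dividing them recovers the ratio $D_p/D_s$, and together these return the two linear relations above, hence force $\bar b_k=-b_k$.

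I expect the delicate point to lie entirely in this converse passage rather than in the (now purely algebraic) cancellation giving $\Delta_k$: extracting $D_p$ and $D_s$ from their squares requires choosing a branch of the square root, and one must verify that the branch singled out by \eqref{eqn: conditions for b-k to be imaginary} is the one for which $h_k=\mathrm{Im}\,b_k$ rather than $-\mathrm{Im}\,b_k$ — the admissible branch being the one compatible with $\Delta_k$ being purely imaginary, as guaranteed by the remark following Lemma~\ref{lemma: determinant is non-zero}. Along the way one also uses $E_p,E_s\neq0$ and $\Delta_k\neq0$ (the latter from Lemma~\ref{lemma: determinant is non-zero} under $\alpha_k\notin\mathbb{R}$) to justify the divisions.
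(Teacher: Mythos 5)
Your proposal is correct and follows essentially the paper's own route: both arguments start from the system \eqref{eqn: simultaneous system of eqn--1}--\eqref{eqn: simultaneous system of eqn--2}, and your direct substitution $b_j=ih_j$ (yielding $ih_jE_p=\lambda d_jD_p$ and $ih_jE_s=\mu d_jD_s$, then $\Delta_j=\tfrac12(E_pD_s-E_sD_p)=ih_jE_pE_s(\lambda-\mu)/(2\lambda\mu d_j)$) is just a reorganization of the paper's step of splitting $b_j=x_j+iy_j$, solving for $x_j,y_j$ by Cramer's rule, and then imposing $x_j=0$, $y_j=h_j$, with the resulting identities \eqref{eqn: conditions for b-k to be imaginary} agreeing exactly. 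The branch-of-square-root caveat you flag in the converse direction is genuine but is equally unaddressed in the paper, whose proof likewise only derives \eqref{eqn: conditions for b-k to be imaginary} as consequences of $x_j=0$, $y_j=h_j$ despite the corollary's ``if'' wording, so your attempt is, if anything, slightly more careful on this point.
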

\begin{proof}
First, let us find the real and imaginary parts of $b_j$.
Since $\alpha_j\notin\mathbb{R}$, we write $b_j=x_j+iy_j$ to obtain from
\eqref{eqn: simultaneous system of eqn--1}
and
\eqref{eqn: simultaneous system of eqn--2}
the system of equations
\begin{align*}
x_j+i\dfrac{p_{j}^{L}p_{j+1}^{R}+p_{j+1}^{L}p_{j}^{R}}
{p_{j}^{L}p_{j+1}^{R}-p_{j+1}^{L}p_{j}^{R}}y_j
=\lambda d_j;
\quad
x_j+i\dfrac{s_{j}^{L}s_{j+1}^{R}+s_{j+1}^{L}s_{j}^{R}}
{s_{j}^{L}s_{j+1}^{R}-s_{j+1}^{L}s_{j}^{R}}y_j
=\mu d_j,
\end{align*}
which can be solved to yield
\begin{align*}
x_j&=
\frac{d_j}{2\Delta_j}
\left[
\mu
\left|
  \begin{array}{cc}
    p_{j}^{L} & -p_{j}^{R} \\
    p_{j+1}^{L} & p_{j+1}^{R} \\
  \end{array}
\right|
\left|
  \begin{array}{cc}
    s_{j}^{L} & s_{j}^{R} \\
    s_{j+1}^{L} & s_{j+1}^{R} \\
  \end{array}
\right|
-
\lambda\left|
  \begin{array}{cc}
    s_{j}^{L} & -s_{j}^{R} \\
    s_{j+1}^{L} & s_{j+1}^{R} \\
  \end{array}
\right|
\left|
  \begin{array}{cc}
    p_{j}^{L} & p_{j}^{R} \\
    p_{j+1}^{L} & p_{j+1}^{R} \\
  \end{array}
\right|
\right],
\\
y_j&=
\frac{(\lambda-\mu)d_j}{2i\Delta_j}
\left|
  \begin{array}{cc}
    p_{j}^{L} & p_{j}^{R} \\
    p_{j+1}^{L} & p_{j+1}^{R} \\
  \end{array}
\right|
\left|
  \begin{array}{cc}
    s_{j}^{L} & s_{j}^{R} \\
    s_{j+1}^{L} & s_{j+1}^{R} \\
  \end{array}
\right|.
\end{align*}
The above (which can be noted to be in the form
$x_j=A_jX_j-B_jY_j$ and $y_j=C_jX_jY_j$)
solved further for $x_j=0$ and $y_j=h_j$, gives
the required relations
\eqref{eqn: conditions for b-k to be imaginary}.
\end{proof}
If $\alpha_j\notin\mathbb{R}$, the proof of 
Lemma~\ref{lemma: determinant is non-zero}
implies that $y_j\neq0$ and hence $b_j\notin\mathbb{R}$,
$j=k,\cdots,n-1$.
However, if we assume $x_j=0$,
that is if $\lambda$, $\mu$ satisfy
\begin{align}
\label{eqn: condition for b-k to be imaginary--lambda-mu}
\frac{\lambda}{\mu}=
\frac{\left|
  \begin{array}{cc}
    p_{j}^{L} & -p_{j}^{R} \\
    p_{j+1}^{L} & p_{j+1}^{R} \\
  \end{array}
\right|
\left|
  \begin{array}{cc}
    s_{j}^{L} & s_{j}^{R} \\
    s_{j+1}^{L} & s_{j+1}^{R} \\
  \end{array}
\right|
}
{\left|
  \begin{array}{cc}
    s_{j}^{L} & -s_{j}^{R} \\
    s_{j+1}^{L} & s_{j+1}^{R} \\
  \end{array}
\right|
\left|
  \begin{array}{cc}
    p_{j}^{L} & p_{j}^{R} \\
    p_{j+1}^{L} & p_{j+1}^{R} \\
  \end{array}
\right|
},
\quad
j=k,k+1,\cdots,n-1,
\end{align}
then $b_j=iy_j=i\epsilon_j d_j$, $\epsilon_j\neq0$,
that is $b_j$ is a scalar multiple of $id_j$.
\begin{remark}
The emphasis on $b_j$ being purely imaginary
arises from a particular form of the pencil
$(z\mathcal{J}_{[0,n]}-\mathcal{H}_{[0,n]})$,
where in fact $b_j=id_j$ and $c_j=1$, $j=0,1,\cdots,n$.
As mentioned in Section~\ref{sec: Introduction},
such pencils appear in analytic function theory
and a case has been made to call such pencils as
Wall pencils.

Further, as an inverse approach to these pencils, the relation
\eqref{eqn: condition for b-k to be imaginary--lambda-mu}
shows that the expression on the right hand side of
\eqref{eqn: condition for b-k to be imaginary--lambda-mu}
must be a constant and equal to the ratio of the given spectral points
for $b_j$ to be at least purely imaginary.
Hence, if we begin with $b_j=id_j$, $j=0,\cdots,k-1$ and $c_j=1$, 
$j=0,\cdots,k$,
appropriate conditions can be added to 
Corollary~\ref{corollary: conditions for b-k to be imaginary} 
so that $b_j$ is equal to $d_j$, $j=k,\cdots,n-1$ and we obtain a Wall pencil.
\end{remark}
The matrix $\mathcal{J}_{[0,n]}$ in Wall pencils is
positive-definite, while no such assumption has been made in
the present manuscript.
However, since the matrix $\mathcal{H}_{[0,n]}$
has been reconstructed,
let us have a look in this direction.
Suppose the assumptions of
Theorem~\ref{theorem: theorem for solution of b-k}
hold.
We put $\lambda-\mu=h$ in
\eqref{eqn: Wronskian-result for J-(k+1) before mu tends to lambda}
to get
\begin{align}
\label{eqn: positive definite for J-k}
s_{[0,k]}^{L}\mathcal{J}_{[0,k]}p_{[0,k]}^{R}=
\frac{(\bar{b}_k-\mu d_k)s_{k+1}^{L}p_k^{R}-
(b_k-\lambda d_k)s_{k}^{L}p_{k+1}^{R}
}
{h},
\end{align}
so that as $\lambda\rightarrow\mu$ or $h\rightarrow0$,
we have the left hand side as
$(s_{[0,k]}^{R})^{\ast}\mathcal{J}_{[0,k]}s_{[0,k]}^{R}$.
If $\mu\neq\alpha_j$,
$j=0,1,\cdots,k$, $s_{k+1}^{R}$ is finite at $\mu$
so that by Lemma~\ref{lemma: vector Phi-1},
$s_{[0,k]}^{R}$ is a vector with finite component.
By L'Hopital's rule,
\eqref{eqn: positive definite for J-k}
yields
\begin{align}
\label{eqn: positive definite identity for J-k}
(s_{[0,k]}^{R})^{\ast}\mathcal{J}_{[0,k]}s_{[0,k]}^{R}=
(\bar{b}_k-\mu d_k)s_{k+1}^{R}(s_k^{R})'-
(b_k-\mu d_k)s_k^{L}(s_{k+1}^{R})'+
d_ks_{k}^{L}s_{k+1}^{R},
\end{align}
so that if $\mathcal{J}_{[0,k]}$ is a positive-definite matrix,
the right hand side above is a positive quantity.

The essence of this observation is the following. For $\Delta_k\neq0$ to hold,
it is necessary that $\alpha_k\notin\mathbb{R}$. Since $\mu\in\mathbb{R}$,
$\mu\neq\alpha_k$ follows trivially.
Recalling the set $\{\alpha_0,\alpha_1,\cdots,\alpha_{k-1}\}$ is just a permutation of
$\{b_0/d_0, b_1/d_1, \cdots, b_{k-1}/d_{k-1}\}$, it follows that
$\mu\neq b_j/d_j$, $j=0,\cdots,k-1$.
Hence, for \eqref{eqn: positive definite identity for J-k} to hold
as an identity with finite quantities on both sides, no zero of
$\mathcal{P}_{n+1}(z)=\det{(z\mathcal{J}_{[0,n]}-\mathcal{H}_{[0,n]})}$
should coincide with $b_j/d_j$, $j=0,1,\cdots,k-1$.
\section{A view with $m$-functions}
\label{sec: view with m-functions}
In this section, we have a look at the reconstruction of the matrix
$\mathcal{H}_{[0,n]}$ through the concept of $m$-functions.
For the general theory of $m$-functions arising in the context of
orthogonal polynomials, we refer to
\cite{Simon-OPUC-part-I-2005}
and for that in the context of linear pencil, we refer to
\cite{Beckermann-D-Z-linear-pencil-JAT-2010}.
But because of the problem under consideration,
we will have use only of the representation
\eqref{eqn: m-functions for finite pencil}
for a point outside the spectrum of the pencil.

In the present case, in addition to $p_m^{R}(z)$ as defined in
\eqref{eqn: components of the eigenvector Phi},
we will use the rational functions
\begin{align*}
q_0^{R}(z)=0,
\quad
q_m^{R}(z)=\frac{\mathcal{Q}_m(z)}{\prod_{j=0}^{m-1}(zd_j-b_j)},
\quad
m=1,\cdots,n,
\end{align*}
where $\mathcal{Q}_m(z)$ satisfy
\eqref{eqn: recurrence relation satisfied by numerator polynomials}
with initial conditions
\eqref{eqn: initial conditions}.
A key role will be played by the following relation
\begin{align}
\label{eqn: Liouville-Ostrogradsy-formula}
\mathcal{P}_{m+1}(z)\mathcal{Q}_{m}(z)-
\mathcal{P}_{m}(z)\mathcal{Q}_{m+1}(z)=
\prod_{j=0}^{m-1}(zd_j-b_j)(zd_j-\bar{b}_j),
\end{align}
called the Liouville-Ostrogradsky formula and
which follows by induction from the recurrence relation
\eqref{eqn: recurrence relation satisfied by numerator polynomials}
along with the initial conditions \eqref{eqn: initial conditions}.
Using
\eqref{eqn: Liouville-Ostrogradsy-formula},
the matrix representation of the bounded operator
$(\omega\mathcal{J}-\mathcal{H})^{-1}$, $\omega\in\rho(\mathcal{H}, \mathcal{J})$
has been found in terms of $p_m^{R(L)}(z)$ and $q_m^{R(L)}(z)$
\cite[Theorem 2.3]{Beckermann-D-Z-linear-pencil-JAT-2010}.
The inverse of banded matrices has been studied, for instance, in
\cite{Stanica-inverse-banded-JCAM-2013}, but
we follow \cite{Beckermann-D-Z-linear-pencil-JAT-2010}
to obtain a finite version, that is the inverse of
the pencil $z\mathcal{J}_{[0,n]}-\mathcal{H}_{[0,n]}$.
\begin{lemma}
\label{lemma: inverse of finite pencil}
Let us denote $m_j:=\mathfrak{m}(\omega,j)-\mathfrak{m}(\omega,n+1)$.
Then the inverse $\mathcal{R}_{[0,n]}(\omega)$ of
$(\omega\mathcal{J}_{[0,n]}-\mathcal{H}_{[0,n]})^{-1}$,
$\omega\in\rho(\mathcal{H}_{[0,n]}, \mathcal{J}_{[0,n]})$
is given by
\begin{align*}
\mathcal{R}_{[0,n]}(\omega)=
\left(
  \begin{array}{cccccc}
    p_0^{L}m_0p_0^{R}  & p_1^{L}m_0p_0^{R} & p_2^{L}m_0p_0^{R} & \cdots 
    & p_{n-1}^{L}m_0p_0^{R} & p_n^{L}m_0p_0^{R} \\
    p_0^{L}m_0p_1^{R}  & p_1^{L}m_1p_1^{R} & p_2^{L}m_1p_1^{R} & \cdots 
    & p_{n-1}^{L}m_1p_1^{R} & p_n^{L}m_1p_1^{R} \\
    p_0^{L}m_0p_2^{R}  & p_1^{L}m_1p_2^{R} & p_2^{L}m_2p_2^{R} & \cdots 
    & p_{n-1}^{L}m_2p_2^{R} & p_n^{L}m_2p_2^{R} \\
    \vdots & \vdots & \vdots & \ddots & \vdots  & \vdots \\
    p_0^{L}m_0p_{n-1}^{R}  & p_1^{L}m_1p_{n-1}^{R} & p_2^{L}m_2p_{n-1}^{R} & \cdots 
    & p_{n-1}^{L}m_{n-1}p_{n-1}^{R} & p_n^{L}m_{n-1}p_{n-1}^{R} \\
    p_0^{L}m_0p_n^{R}  & p_1^{L}m_1p_n^{R} & p_2^{L}m_2p_n^{R} & \cdots 
    & p_{n-1}^{L}m_{n-1}p_{n}^{R} & p_n^{L}m_{n}p_n^{R} \\
  \end{array}
\right).
\end{align*}
\end{lemma}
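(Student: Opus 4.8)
The plan is to verify directly that the displayed matrix $\mathcal{R}_{[0,n]}(\omega)$ inverts the tridiagonal pencil $\omega\mathcal{J}_{[0,n]}-\mathcal{H}_{[0,n]}$, by forming the product $(\omega\mathcal{J}_{[0,n]}-\mathcal{H}_{[0,n]})\,\mathcal{R}_{[0,n]}(\omega)$ and showing it equals the identity. Since the pencil is tridiagonal, its $(i,l)$ entry against a column of $\mathcal{R}_{[0,n]}(\omega)$ is a three-term combination $A_{i,i-1}\mathcal{R}_{i-1,l}+A_{i,i}\mathcal{R}_{i,l}+A_{i,i+1}\mathcal{R}_{i+1,l}$, with $A_{i,i-1}=\omega d_{i-1}-\bar{b}_{i-1}$, $A_{i,i}=\omega c_i-a_i$ and $A_{i,i+1}=\omega d_i-b_i$, so everything reduces to the recurrence \eqref{eqn: rational equation for components}. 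Following \cite[Theorem 2.3]{Beckermann-D-Z-linear-pencil-JAT-2010}, the essential point is that each column of $\mathcal{R}_{[0,n]}(\omega)$ is built from two solutions of that recurrence: the solution $p_m^{R}$ of \eqref{eqn: components of the eigenvector Phi} (regular at the left end, $p_{-1}^{R}=0$), and a second solution $\chi_m:=\mathcal{Q}_m-\mathfrak{m}(\omega,n+1)\mathcal{P}_m$ normalized as in \eqref{eqn: components of the eigenvector Phi}, which by the very definition of $m_m=\mathfrak{m}(\omega,m)-\mathfrak{m}(\omega,n+1)$ satisfies $m_m p_m^{R}=\chi_m\,p_0^{R}/\prod_{j=0}^{m-1}(b_j-\omega d_j)$.

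First I would record the recurrences to be used: $p_m^{R}$ and $p_m^{L}$ obey \eqref{eqn: rational equation for components} and its $p^{L}$-analogue for all $z$, since these follow purely from the polynomial recurrence \eqref{eqn: recurrence relation satisfied by numerator polynomials}, and $\mathcal{Q}_m$, hence $\chi_m$, obeys the same recurrence with the initial data \eqref{eqn: initial conditions}. The key consequence of the finite truncation is that $\chi_{n+1}=\mathcal{Q}_{n+1}-\mathfrak{m}(\omega,n+1)\mathcal{P}_{n+1}=0$, that is, the second solution is regular at the truncation index $n+1$. This is exactly what adapts the infinite-pencil formula of \cite{Beckermann-D-Z-linear-pencil-JAT-2010} to the finite case and is the role played by subtracting $\mathfrak{m}(\omega,n+1)$ in the definition of $m_j$.

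Next I would treat the entries off the main diagonal. For a fixed column $l$ and an interior row $i\neq l$, all three of $\mathcal{R}_{i-1,l},\mathcal{R}_{i,l},\mathcal{R}_{i+1,l}$ carry the same $m$-index (namely $m_l$ on one side of the diagonal and the running index on the other), so the three-term combination is precisely the left-hand side of \eqref{eqn: rational equation for components} applied either to $p_m^{R}$ or to $\chi_m$, and therefore vanishes. On the diagonal $i=l$ the $m$-index switches as one crosses the diagonal, so the combination no longer telescopes to zero; instead it collapses to a single Wronskian-type term $\mathcal{P}_{l}\mathcal{Q}_{l-1}-\mathcal{P}_{l-1}\mathcal{Q}_{l}$ coming from the difference $m_{l-1}-m_l$, which the Liouville-Ostrogradsky formula \eqref{eqn: Liouville-Ostrogradsy-formula} evaluates in closed form as $\prod_{j=0}^{l-2}(\omega d_j-b_j)(\omega d_j-\bar{b}_j)$. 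After cancelling this against the normalizing products in $p_l^{L}$ and $p_{l-1}^{R}$ and against the coefficient $A_{l,l-1}=\omega d_{l-1}-\bar{b}_{l-1}$, the diagonal entry reduces to a constant multiple of $p_0^{L}p_0^{R}$, which the normalization of the end components fixes to be $1$.

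The hard part will be the bookkeeping at the two boundary rows and the matching of normalizations. Away from the ends the interior recurrence does all the work, but in the first and last rows the tridiagonal combination loses a term, so one must use the boundary conditions $p_{-1}^{R}=0$ and $\chi_{n+1}=0$ in place of the generic three-term relation; obtaining the correct diagonal value of $1$ rather than an $\omega$-dependent constant at the corners is where the Liouville-Ostrogradsky normalization and the precise choice of $p_0^{R},p_0^{L}$ must be reconciled. Once the off-diagonal entries, the diagonal entries, and the two boundary rows are checked, the identity $(\omega\mathcal{J}_{[0,n]}-\mathcal{H}_{[0,n]})\,\mathcal{R}_{[0,n]}(\omega)=I$ follows, establishing that $\mathcal{R}_{[0,n]}(\omega)$ is the stated inverse.
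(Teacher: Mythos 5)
Your toolkit is the same as the paper's: the two solutions of \eqref{eqn: rational equation for components}, the truncated Weyl solution $\chi_m=\mathcal{Q}_m-\mathfrak{m}(\omega,n+1)\mathcal{P}_m$ with $\chi_{n+1}=0$, and the Liouville--Ostrogradsky formula \eqref{eqn: Liouville-Ostrogradsy-formula} for the normalization; the paper merely organizes these ingredients constructively, building the rows of the inverse from the truncated-vector identities \eqref{eqn: in view of eqn after L-O formula} and \eqref{eqn: in terms of m-function equation} and eliminating $\vec{e}_0^{T}$, where you verify the product column by column. That difference is cosmetic. The genuine problem is that you defer to ``bookkeeping'' exactly the step at which the verification of the matrix \emph{as displayed} breaks down, and assert it will close: it will not.

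Concretely, the displayed $(i,j)$ entry is $p_j^{L}m_{\min(i,j)}p_i^{R}$, so in a fixed column $l$ the entries \emph{below} the diagonal are $p_l^{L}m_l\,p_i^{R}\propto p_i^{R}$ and those \emph{above} are $p_l^{L}m_i\,p_i^{R}\propto\chi_i$. Thus the solution satisfying the top boundary condition ($p_{-1}^{R}=0$) occupies the bottom of each column and the solution satisfying the bottom condition ($\chi_{n+1}=0$) occupies the top --- the opposite of what your boundary-row argument requires. Indeed, row $n$ of the pencil applied to column $l<n$ gives $p_l^{L}m_l\bigl[(\omega d_{n-1}-\bar{b}_{n-1})p_{n-1}^{R}+(\omega c_n-a_n)p_n^{R}\bigr]$, which is proportional to $\mathcal{P}_{n+1}(\omega)\neq0$ on the resolvent set (using the paper's modified last component $p_n^{R}$ only moves the failure to row $n-1$), while row $0$ applied to column $l>0$ picks up the term $\mathcal{Q}_{-1}-\mathfrak{m}(\omega,n+1)\mathcal{P}_{-1}=-1\neq0$. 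Even the $1\times1$ corner fails: by Cramer's rule and \eqref{eqn: m-functions for finite pencil} the $(0,0)$ entry of the inverse is $\mathfrak{m}(\omega,n+1)$, whereas the displayed entry is $p_0^{L}m_0p_0^{R}=-\mathfrak{m}(\omega,n+1)p_0^{L}p_0^{R}$. What your computation, carried out honestly, actually establishes is the variant with $m_{\max(i,j)}$ (with the normalization fixed by the corner check) --- and this is also what the paper's own elimination step really produces, since its $j$-th row has $l$-th component $p_l^{L}\bigl[\mathfrak{m}(\omega,\max(j,l))-\mathfrak{m}(\omega,n+1)\bigr]p_j^{R}$ before the final ``simplification.'' So either redo the verification against the $\max$-oriented matrix, or derive the rows as the paper does; as written, the claim that only boundary bookkeeping remains is the gap, because those boundary checks fail for the statement as printed.
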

\begin{proof}
Consider the $1\times(n+1)$ vector
$
p_{[0,j])}^{L}:
=\left(
\begin{array}{ccccccc}
p_0^{L} & p_1^{L} & \cdots &p_j^{L}  & 0 & \cdots & 0 \\
\end{array}
\right)
$
and similarly the vector $q_{[0,j]}^{L}$.
Using \eqref{eqn: rational equation for components}
for the left eigenvector we obtain
\begin{align*}
p_{[0,j]}^{L}(\omega\mathcal{J}_{(0,n)}-\mathcal{H}_{(0,n)})
&=
-(\omega d_j-\bar{b}_j)p_{j+1}^{L}\vec{e}_j^{T}+
(\omega d_j-b_j)p_j^{L}\vec{e}_{j+1}^{T},
\\
q_{[0,j]}^{L}(\omega\mathcal{J}_{(0,n)}-
\mathcal{H}_{(0,n)})
&=
\vec{e}_0^{T}-(\omega d_j-\bar{b}_j)q_{j+1}^{L}\vec{e}_j^{T}+
(\omega d_j-b_j)q_j^{L}\vec{e}_{j+1}^{T},
\end{align*}
which in view of
\eqref{eqn: Liouville-Ostrogradsy-formula}
leads to
\begin{align}
\label{eqn: in view of eqn after L-O formula}
[q_j^{R}p_{[0,j]}^{L}-p_j^{R}q_{[0,j]}^{L}]
(\omega\mathcal{J}_{[0,n]}-\mathcal{H}_{[0,n]})=
\vec{e}_j^{T}-p_j^{R}\vec{e}_0^{T}.
\end{align}
The vectors $p_{[0,n]}^{L}$ and $q_{[0,n]}^{L}$
with the above computation yield
\begin{align*}
[q_{n+1}^{R}p_{[0,n]}^{L}-p_{n+1}^{R}q_{(0,n)}^{L}]
(\omega\mathcal{J}_{[0,n]}-\mathcal{H}_{[0,n]})=
-p_{n+1}^{R}\vec{e}_0^{T},
\end{align*}
which in terms of $m$-functions can also be written as
\begin{align}
\label{eqn: in terms of m-function equation}
[q_{[0,n]}^{L}-\mathfrak{m}(\omega,n+1)p_{[0,n]}^{L}]
(\omega\mathcal{J}_{[0,n]}-\mathcal{H}_{[0,n]})=
\vec{e}_0^{T}.
\end{align}
Eliminating $\vec{e}_0^{T}$ between
\eqref{eqn: in view of eqn after L-O formula}
and \eqref{eqn: in terms of m-function equation},
we obtain
\begin{align*}
p_j^{R}[q_{[0,n]}^{L}-\mathfrak{m}(\omega, n+1)p_{[0,n]}^{L}-
q_{[0,j]}^{L}+\mathfrak{m}(\omega,j)p_{[0,j]}^{L}]
(\omega\mathcal{J}_{[0,n]}-\mathcal{H}_{[0,n]})
=\vec{e}_j^{T},
\end{align*}
which upon further simplification gives the matrix
$R_{[0,n]}(\omega)$.
\end{proof}
In compact form, the $(i,j)^{th}$ entry of $\mathcal{R}_{[0,n]}(\omega)$ is given by
$p_j^{L}m_{\min{(i,j)}}p_i^{R}$.
Next, for $\omega\in\rho(\mathcal{H}_{[0,n]},\mathcal{J}_{[0,n]})$,
let us factorize
\begin{align}
\label{eqn: LDU decomposition}
\mathcal{R}_{[0,n]}(\omega)=(\omega\mathcal{J}_{[0,n]}-\mathcal{H}_{[0,n]})^{-1}=
\mathcal{L}_{[0,n]}(\omega)\mathcal{D}_{[0,n]}(\omega)\mathcal{U}_{[0,n]}(\omega).
\end{align}
Then, owing to the Hermitian nature of the eigenvalue equation involved,
we choose
\begin{align*}
\mathcal{U}_{[0,n]}(\omega)=
\mathcal{L}_{[0,n]}^{\ast}(\omega),
\quad\mbox{where}\quad
\mathcal{L}_{[0,n]}(\omega)=
\left(
  \begin{array}{cccccc}
    p_0^{R} & 0 & 0 & \cdots & 0 & 0 \\
    p_1^{R} & p_1^{R} & 0 & \cdots & 0 & 0 \\
   \vdots & \vdots & \ddots & \ddots & \vdots & \vdots \\
    p_{n-1}^{R} & p_{n-1}^{R} & p_{n-1}^{R} & \cdots & p_{n-1}^R & 0 \\
    p_n^{R} & p_{n}^{R} & p_{n}^{R} & \cdots & p_n^{R} & p_n^{R} \\
  \end{array}
\right),
\end{align*}
and $\mathcal{D}_{[0,n]}(\omega)$ is the diagonal matrix
diag$(d_0(\omega), d_1(\omega), \cdots, d_n(\omega))$
where
\begin{align*}
d_0(\omega)=m_0,
\quad
d_j(\omega)=(m_{j}-m_{j-1})
=\mathfrak{m}(\omega, j)-\mathfrak{m}(\omega, j-1)
\quad j=1,2,\cdots,n.
\end{align*}
As a matter of verification, with $m_{-1}:=0$, we have in the right
hand side of \eqref{eqn: LDU decomposition}
\begin{align*}
i^{th}\hbox{ row} \times j^{th}\hbox{ column} =
p_i^{R}\sum_{k=0}^{\min{(i,j)}}(m_k-m_{k-1})p_{j}^{L}=
p_j^{L}m_{\min{(i,j)}}p_i^{R}.
\end{align*}
With this decomposition we can easily invert
$\mathcal{R}_{[0,n]}(\omega)$ again, so that
$[\mathcal{R}_{[0,n]}(\omega)]^{-1}=(\omega\mathcal{J}_{[0,n]}-\mathcal{H}_{[0,n]})$
will be a matrix in which the entries are given in terms of the $m$-functions.
We illustrate this for the trailing submatrix
$[\Psi_{[k+1,n]}(\omega)]^{-1}$.
\begin{lemma}
\label{lemma: inverse of trailing submatrix}
Suppose $\mathfrak{m}(\omega,k+1)\neq\mathfrak{m}(\omega,n+1)$ and
$\mathfrak{m}(\omega, i)\neq\mathfrak{m}(\omega, i-1)$, $i=k+2,\cdots,n$.
The entries of the inverse
of the trailing sub-matrix $\Psi_{[k+1,n]}(\omega)$
 are given by
\begin{align}
\label{eqn: entries of trailing matrix}
[\Psi_{[k+1,n]}(\omega)]^{-1}_{i,j}=
\left\{
  \begin{array}{ll}
    \frac{1}{p_{i}^{L}[\mathfrak{m}(\omega, i)-\mathfrak{m}(\omega, i-1)]p_{i}^{R}}+
    \frac{1}{p_{j}^{L}[\mathfrak{m}(\omega, j+1)-\mathfrak{m}(\omega, j)]p_{j}^{R}}, & \hbox{i=j;} \\
    \frac{-1}{p_{i}^{L}[\mathfrak{m}(\omega, j)-\mathfrak{m}(\omega, i)]p_{j}^{R}}, & \hbox{$i<j$,}\\
    0, & \hbox{$|i-j|>1$,}
  \end{array}
\right.
\end{align}
for $i,j=k+2,k+3,\cdots,n-1$, while
\begin{align}
\label{eqn: entries of trailing matrix first and last}
[\Psi_{[k+1,n]}(\omega)]^{-1}_{i,j}=
\left\{
  \begin{array}{ll}
    \frac{1}{p_{i}^{L}[\mathfrak{m}(\omega,i)-\mathfrak{m}(\omega, n+1)]p_{i}^{R}}+
    \frac{1}{p_{j}^{L}[\mathfrak{m}(\omega, j+1)-\mathfrak{m}(\omega, j)]p_{j}^{R}}, & \hbox{i=j=k+1;} \\
    \frac{1}{p_{i}^{L}[\mathfrak{m}(\omega,i)-\mathfrak{m}(\omega, i-1)]p_{i}^{R}}, & \hbox{i=j=n.}
  \end{array}
\right.
\end{align}
\end{lemma}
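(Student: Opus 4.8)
The plan is to mirror, at the level of the trailing block, the factorization $\mathcal{R}_{[0,n]}(\omega)=\mathcal{L}_{[0,n]}(\omega)\mathcal{D}_{[0,n]}(\omega)\mathcal{L}_{[0,n]}^{\ast}(\omega)$ of \eqref{eqn: LDU decomposition} and then to invert each factor. First I would record that, being the trailing submatrix of $\mathcal{R}_{[0,n]}(\omega)$, the matrix $\Psi_{[k+1,n]}(\omega)$ has $(i,j)$ entry $p_j^{L}m_{\min(i,j)}p_i^{R}$ for $i,j\in\{k+1,\dots,n\}$, exactly the semiseparable form that produced the compact description following Lemma~\ref{lemma: inverse of finite pencil}.

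The key step is to factor $\Psi_{[k+1,n]}(\omega)=\tilde{\mathcal{L}}\,\tilde{\mathcal{D}}\,\tilde{\mathcal{L}}^{\ast}$, where $\tilde{\mathcal{L}}$ is the lower-triangular matrix whose $i$-th row equals $p_i^{R}$ in every column $\le i$ (the trailing analogue of $\mathcal{L}_{[0,n]}(\omega)$) and $\tilde{\mathcal{D}}=\mathrm{diag}(\tilde d_{k+1},\dots,\tilde d_n)$. Carrying out the same telescoping verification as for \eqref{eqn: LDU decomposition} shows the product has $(i,j)$ entry $p_i^{R}\big(\sum_{l=k+1}^{\min(i,j)}\tilde d_l\big)p_j^{L}$, so matching with $p_j^{L}m_{\min(i,j)}p_i^{R}$ forces $\tilde d_{k+1}=m_{k+1}=\mathfrak{m}(\omega,k+1)-\mathfrak{m}(\omega,n+1)$ and $\tilde d_i=m_i-m_{i-1}=\mathfrak{m}(\omega,i)-\mathfrak{m}(\omega,i-1)$ for $i=k+2,\dots,n$. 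This is the crux: because the telescoping sum now starts at $k+1$ rather than at $0$, the first diagonal entry is the long difference $\mathfrak{m}(\omega,k+1)-\mathfrak{m}(\omega,n+1)$ instead of a consecutive one, and this is precisely what later produces the special $(k+1,k+1)$ entry in \eqref{eqn: entries of trailing matrix first and last}. The standing hypotheses $\mathfrak{m}(\omega,k+1)\neq\mathfrak{m}(\omega,n+1)$ and $\mathfrak{m}(\omega,i)\neq\mathfrak{m}(\omega,i-1)$ are exactly the statements $\tilde d_l\neq0$, which make $\tilde{\mathcal{D}}$, and hence $\Psi_{[k+1,n]}(\omega)$, invertible and give $[\Psi_{[k+1,n]}(\omega)]^{-1}=(\tilde{\mathcal{L}}^{-1})^{\ast}\tilde{\mathcal{D}}^{-1}\tilde{\mathcal{L}}^{-1}$.

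Next I would invert the triangular factor explicitly. Writing $\tilde{\mathcal{L}}=\mathrm{diag}(p_i^{R})\,T$ with $T$ the lower-triangular all-ones matrix gives $\tilde{\mathcal{L}}^{-1}=T^{-1}\mathrm{diag}(1/p_i^{R})$, which is lower bidiagonal with diagonal entries $1/p_i^{R}$ and subdiagonal entries $-1/p_{i-1}^{R}$; correspondingly $(\tilde{\mathcal{L}}^{-1})^{\ast}$ is upper bidiagonal with diagonal $1/p_i^{L}$ and superdiagonal $-1/p_i^{L}$. Finally I multiply the upper-bidiagonal, diagonal and lower-bidiagonal factors. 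The product of these three banded factors is automatically tridiagonal, which yields the vanishing of all entries with $|i-j|>1$; reading off the surviving diagonal entries (a sum of two reciprocals $\tilde d_i^{-1}+\tilde d_{i+1}^{-1}$ scaled by $1/(p_i^{L}p_i^{R})$) and the super-/sub-diagonal entries ($-(p_i^{L}\tilde d_{i+1}p_{i+1}^{R})^{-1}$) reproduces \eqref{eqn: entries of trailing matrix}. The two boundary cases in \eqref{eqn: entries of trailing matrix first and last} fall out of the same multiplication: at $i=j=k+1$ one of the two reciprocals is $\tilde d_{k+1}^{-1}=(\mathfrak{m}(\omega,k+1)-\mathfrak{m}(\omega,n+1))^{-1}$, while at $i=j=n$ there is no $\tilde d_{n+1}$ term since the block ends at $n$, leaving a single reciprocal. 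The only genuine care required is this bookkeeping at the two ends; the interior computation is the routine band-times-band multiplication, so I expect the main obstacle to be stating the modified diagonal $\tilde{\mathcal{D}}$ correctly rather than any analytic difficulty.
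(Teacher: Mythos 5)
Your proof is correct and takes essentially the same route as the paper: the paper's proof likewise starts from the trailing factorization $\mathrm{R}_{[k+1,n]}(\omega)=\mathrm{L}_{[k+1,n]}(\omega)\mathrm{D}_{[k+1,n]}(\omega)\mathrm{L}_{[k+1,n]}^{\ast}(\omega)$ with the modified first diagonal entry $d_{k+1}(\omega)=\mathfrak{m}(\omega,k+1)-\mathfrak{m}(\omega,n+1)$ and $d_j(\omega)=\mathfrak{m}(\omega,j)-\mathfrak{m}(\omega,j-1)$ for $j=k+2,\dots,n$, then writes down the bidiagonal inverse $[\mathrm{L}_{[k+1,n]}(\omega)]^{-1}$ and multiplies the three factors to read off \eqref{eqn: entries of trailing matrix} and \eqref{eqn: entries of trailing matrix first and last}. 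Your telescoping argument pinning down $\tilde d_{k+1}=m_{k+1}$ makes explicit a step the paper merely posits, but it is the same decomposition and the same computation.
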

\begin{proof}
We start with the decomposition
$\mathrm{R}_{[k+1,n]}(\omega)=
\mathrm{L}_{[k+1,n]}(\omega)\mathrm{D}_{[k+1,n]}(\omega)\mathrm{U}_{[k+1,n]}(\omega)$
where
\begin{align*}
\mathrm{U}_{[k+1,n]}(\omega)=\mathrm{L}_{[k+1,n]}^{\ast}(\omega)
\quad\mbox{with}\quad
\mathrm{L}_{[k+1,n]}(\omega)=
\left(
  \begin{array}{cccccc}
    p_{k+1}^{R} & 0 & 0 & \cdots & 0 & 0 \\
    p_{k+2}^{R} & p_{k+2}^{R} & 0 & \cdots & 0 & 0 \\
   \vdots & \vdots & \ddots & \ddots & \vdots & \vdots \\
    p_{n-1}^{R} & p_{n-1}^{R} & p_{n-1}^{R} & \cdots & p_{n-1}^R & 0 \\
    p_n^{R} & p_{n}^{R} & p_{n}^{R} & \cdots & p_{n}^{R} & p_{n}^{R} \\
  \end{array}
\right)
\end{align*}
and $\mathrm{D}_{[k+1,n]}$=diag$\{d_{k+1}, d_{k+2}, \cdots, d_{n}\}$ given by
\begin{align*}
d_{k+1}(\omega)
&=m_{k+1}=\mathfrak{m}(\omega, k+1)-\mathfrak{m}(\omega,n+1),
\\
d_j(\omega)
&=(m_{j}-m_{j-1})
=\mathfrak{m}(\omega, j)-\mathfrak{m}(\omega, j-1),
\quad j=k+2,k+3,\cdots,n.
\end{align*}
It can be easily verified that
\begin{align*}
[\mathrm{L}_{[k+1,n]}(\omega)]^{-1}
=
\left(
  \begin{array}{cccccc}
    1/p_{k+1}^{R} & 0 & 0 & \cdots & 0 & 0 \\
    -1/p_{k+1}^{R} & 1/p_{k+2}^{R} & 0 & \cdots & 0 & 0 \\
    \vdots & \vdots & \ddots & \ddots & \vdots & \vdots \\
    0 & 0 & 0 & \cdots & 1/p_{n-1}^{R} & 0 \\
    0 & 0 & 0 & \cdots & -1/p_{n-1}^{R} & 1/p_{n}^{R} \\
  \end{array}
\right),
\end{align*}
and $[\mathrm{U}_{[k+1,n]}(\omega)]^{-1}=[\mathrm{L}_{[k+1,n]}^{\ast}(\omega)]^{-1}$.
Then,
\begin{align*}
[\mathrm{R}_{[k+1,n]}(\omega)]^{-1}=
[\mathrm{U}_{[k+1,n]}(\omega)]^{-1}[\mathrm{D}_{[k+1,n]}(\omega)]^{-1}
[\mathrm{L}_{[k+1,n]}(\omega)]^{-1},
\end{align*}
gives the required entries
\eqref{eqn: entries of trailing matrix}
and
\eqref{eqn: entries of trailing matrix first and last}.
\end{proof}
We are now ready to view the entries of
$\mathcal{H}_{[0,n]}$ in terms of $m$-functions.
Let us denote
\begin{align*}
\mathfrak{m}_{i}^{j}(\omega)=
\frac{1}{p_i^{L}[\mathfrak{m}(\omega, j)-\mathfrak{m}(\omega, i)]p_i^{R}}
\quad\mbox{and}\quad
\widehat{\mathfrak{m}}_{i}^{j}(\omega)=
\frac{1}{p_i^{L}[\mathfrak{m}(\omega, j)-\mathfrak{m}(\omega, i)]p_j^{R}}
\end{align*}
\begin{theorem}
\label{theorem: reconstruction via m-functions}
Suppose the $m$-functions $\mathfrak{m}(\omega,j)$ of the pencil
$(\omega\mathcal{J}_{[0,j]}-\mathcal{H}_{[0,j]})$ are known
and satisfy the assumptions of
Lemma~\ref{lemma: inverse of trailing submatrix}
for $j=k+1, k+2,\cdots,n,n+1$.
If $\omega\in\rho(\mathcal{H}_{0,k}, \mathcal{J}_{[0,k]})$,
then the matrix $\mathcal{H}_{[0,n]}$ can be reconstructed with
the entries given by
\begin{align}
\label{eqn: entries b-j-m-functions}
b_j=
\omega d_j+
\widehat{\mathfrak{m}}_{j+1}^{j}(\omega),
\quad
j=k+1,k+2,\cdots,n-1,
\end{align}
and
\begin{align}
\label{eqn: entries a-j-m-functions}
a_j
=
\left\{
  \begin{array}{ll}
    \omega c_{j}-
\mathfrak{m}_{j}^{j+1}(\omega)+
\mathfrak{m}_{j}^{n+1}(\omega)+
\frac{|\omega d_k-b_k|^2}{\mathfrak{m}_{j-1}^{j}(\omega)}
& \hbox{$j=k+1$;}
\\
    \omega c_j+
\mathfrak{m}_{j}^{j-1}(\omega)-\mathfrak{m}_{j}^{j+1}(\omega),
& \hbox{$j=k+2,\cdots,n-1$,}
\\
    \omega c_j+
\mathfrak{m}_{j}^{j-1}(\omega),
& \hbox{$j=n$.}
 \end{array}
\right.
\end{align}
\end{theorem}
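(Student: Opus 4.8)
The plan is to reconstruct the trailing block $D:=\omega\mathcal{J}_{[k+1,n]}-\mathcal{H}_{[k+1,n]}$ of the pencil from the inverse $[\Psi_{[k+1,n]}(\omega)]^{-1}$ computed in Lemma~\ref{lemma: inverse of trailing submatrix}, and then to read the entries $a_j,b_j$ directly off the tridiagonal entries of $D$. Writing the full pencil in block form
\[
\omega\mathcal{J}_{[0,n]}-\mathcal{H}_{[0,n]}=
\begin{pmatrix} A & B \\ C & D \end{pmatrix},
\qquad
A=\omega\mathcal{J}_{[0,k]}-\mathcal{H}_{[0,k]},
\]
the key structural fact I would record first is that $\Psi_{[k+1,n]}(\omega)$ is exactly the trailing block of $(\omega\mathcal{J}_{[0,n]}-\mathcal{H}_{[0,n]})^{-1}$, so that by the Schur-complement form of the block inverse $[\Psi_{[k+1,n]}(\omega)]^{-1}=D-CA^{-1}B$. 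Since $\omega\in\rho(\mathcal{H}_{[0,k]},\mathcal{J}_{[0,k]})$ the block $A$ is invertible, so this identity is meaningful.

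Next I would use tridiagonality to pin down the correction $CA^{-1}B$. The coupling blocks have a single nonzero entry each, $\omega d_k-b_k$ in the corner of $B$ and $\omega d_k-\bar b_k$ in the corner of $C$; hence $CA^{-1}B$ is rank one, supported only at the $(k+1,k+1)$ position, with value $|\omega d_k-b_k|^2\,(A^{-1})_{k,k}$. Applying Lemma~\ref{lemma: inverse of finite pencil} to the leading block identifies $(A^{-1})_{k,k}=p_k^{L}[\mathfrak{m}(\omega,k)-\mathfrak{m}(\omega,k+1)]p_k^{R}=-1/\mathfrak{m}_k^{k+1}(\omega)$, so that the whole correction equals $-|\omega d_k-b_k|^2/\mathfrak{m}_k^{k+1}(\omega)$ at $(k+1,k+1)$ and is zero elsewhere.

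With $D=[\Psi_{[k+1,n]}(\omega)]^{-1}+CA^{-1}B$ in hand, the reconstruction is a matter of equating entries. For every position away from $(k+1,k+1)$ the correction vanishes and $D$ agrees with $[\Psi_{[k+1,n]}(\omega)]^{-1}$; thus the super-diagonal relation $D_{j,j+1}=\omega d_j-b_j$ together with the off-diagonal formula in \eqref{eqn: entries of trailing matrix} yields \eqref{eqn: entries b-j-m-functions}, while the diagonal relation $D_{j,j}=\omega c_j-a_j$ with the $i=j$ formulas of Lemma~\ref{lemma: inverse of trailing submatrix} (including the boundary cases $j=k+1$ and $j=n$ from \eqref{eqn: entries of trailing matrix first and last}) yields the three cases of \eqref{eqn: entries a-j-m-functions}. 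The extra summand $|\omega d_k-b_k|^2/\mathfrak{m}_k^{k+1}(\omega)$ in the $j=k+1$ case is precisely $-CA^{-1}B$ at $(k+1,k+1)$, and the term $\mathfrak{m}_{k+1}^{n+1}(\omega)$ there originates from the boundary diagonal factor $d_{k+1}(\omega)=\mathfrak{m}(\omega,k+1)-\mathfrak{m}(\omega,n+1)$ of the factorization used in Lemma~\ref{lemma: inverse of trailing submatrix}.

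The step I expect to be the main obstacle is establishing and using the Schur-complement identity $[\Psi_{[k+1,n]}(\omega)]^{-1}=D-CA^{-1}B$ correctly, and then translating the rank-one correction into $m$-function language---specifically recognizing that $(A^{-1})_{k,k}$ is controlled by the \emph{leading} pencil's $m$-functions (hence the appearance of $\mathfrak{m}_k^{k+1}$), whereas the boundary entry at $k+1$ carries the \emph{global} value $\mathfrak{m}(\omega,n+1)$. Once these two $m$-function inputs are correctly located, the remaining work is routine sign bookkeeping in the differences $\mathfrak{m}(\omega,j)-\mathfrak{m}(\omega,i)$ that define $\mathfrak{m}_i^{j}(\omega)$ and $\widehat{\mathfrak{m}}_i^{j}(\omega)$.
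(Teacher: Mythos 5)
Your proposal is correct and matches the paper's own proof essentially step for step: the paper likewise writes the pencil in the block form \eqref{eqn: Schur complement our notations}, uses the Schur-complement identity \eqref{eqn: Schur complement general formula} to obtain \eqref{eqn: comparison of respective blocks} (your $D=[\Psi_{[k+1,n]}(\omega)]^{-1}+CA^{-1}B$), evaluates the rank-one correction $\mathcal{O}_{\omega}^{\ast}(\omega\mathcal{J}_{[0,k]}-\mathcal{H}_{[0,k]})^{-1}\mathcal{O}_{\omega}$ at the $(k+1,k+1)$ position via Lemma~\ref{lemma: inverse of finite pencil} applied to the leading pencil, and reads the entries $a_j$, $b_j$ off the tridiagonal block using \eqref{eqn: entries of trailing matrix} and \eqref{eqn: entries of trailing matrix first and last} from Lemma~\ref{lemma: inverse of trailing submatrix}, exactly as you describe, including the boundary term carrying $\mathfrak{m}(\omega,n+1)$ at $j=k+1$. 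No gaps.
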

\begin{proof}
We use the following representation of the inverse obtained through
the concept of Schur's complement
\cite{Carlson-Schur-complement-LAA-1986}
\begin{align}
\label{eqn: Schur complement general formula}
\left(
  \begin{array}{cc}
    A & B \\
    C & D \\
  \end{array}
\right)^{-1}
=
\left(
  \begin{array}{cc}
    I & -A^{-1}B \\
    0 & I \\
  \end{array}
\right)
\left(
  \begin{array}{cc}
    A^{-1} & 0 \\
    0 & [D-CA^{-1}B]^{-1} \\
  \end{array}
\right)
\left(
  \begin{array}{cc}
    I & 0 \\
    -CA^{-1} & I \\
  \end{array}
\right),
\end{align}
where $I$ is the identity matrix of appropriate order.
Since $\Psi_{[k+1,n]}(\omega)$ is the trailing submatrix of the inverse
of the pencil, we also have
\begin{align}
\label{eqn: Schur complement our notations}
\left(
  \begin{array}{cc}
    w\mathcal{J}_{[0,k]}-\mathcal{H}_{[0,k]} & \mathcal{O}_{\omega} \\
    \mathcal{O}_{\omega}^{\ast} & w\mathcal{J}_{[k+1,n]}-\mathcal{H}_{[k+1,n]} \\
  \end{array}
\right)^{-1}
=
\left(
  \begin{array}{cc}
    \Psi_{[0,k]}(\omega) & \ast \\
    \ast & \Psi_{[k+1,n]}(\omega) \\
  \end{array}
\right).
\end{align}
Since $\omega\in\rho(\mathcal{H}_{[0,k]},\mathcal{J}_{[0,k]})$,
we can substitute $A=\omega\mathcal{J}_{[0,k]}-\mathcal{H}_{[0,k]}$,
$B=\mathcal{O}_{\omega}$ given by
\eqref{eqn: O-lambda-matrix}
and
$D=\omega\mathcal{J}_{[k+1,n]}-\mathcal{H}_{[k+1,n]}$.
Then comparing the respective blocks
of \eqref{eqn: Schur complement general formula}
and
\eqref{eqn: Schur complement our notations}, we get
\begin{align}
\label{eqn: comparison of respective blocks}
(\omega\mathcal{J}_{[k+1,n]}-\mathcal{H}_{[k+1,n]})
=
[\Psi_{[k+1,n]}(\omega)]^{-1}
+
\mathcal{O}_\omega^{\ast}
(\omega\mathcal{J}_{[0,k]}-\mathcal{H}_{[0,k]})^{-1}
\mathcal{O}_{\omega}.
\end{align}
We use Lemma~\ref{lemma: inverse of finite pencil}
to obtain the inverse $\mathcal{R}_{[0,k]}$ of
$\omega\mathcal{J}_{[0,k]}-\mathcal{H}_{[0,k]}$
so that
\begin{align*}
\mathcal{O}_{\omega}^{\ast}[\omega\mathcal{J}_{[0,k]}-\mathcal{H}_{[0,k]}]^{-1}
\mathcal{O}_{\omega}
=
p_{k}^{L}[\mathfrak{m}(\omega, k)-\mathfrak{m}(\omega, k+1)]p_k^{R}|\omega d_k-b_k|^2
\vec{e}_0\vec{e}_0^{T},
\end{align*}
where $\vec{e}_0\in\mathbb{R}^{k+1}$.
The entries of $[\Psi_{[k+1,n]}]^{-1}$
obtained from
\eqref{eqn: entries of trailing matrix}
and
\eqref{eqn: entries of trailing matrix first and last}
and used in
\eqref{eqn: comparison of respective blocks}
yields the required expressions
\eqref{eqn: entries a-j-m-functions}
and
\eqref{eqn: entries b-j-m-functions}.
\end{proof}
It may be observed that
the rational functions $q_j^{R(L)}(\omega)$ are only intermediary
since the final expressions for the entries depend on the $m$-functions
defined by
\eqref{eqn: m-functions for finite pencil}.
As opposed to
\eqref{eqn: entries a-j-in terms of b-j}, the $a_j's$ in the present case,
except for $a_{k+1}$, are determined independent of $b_j's$
while for $a_{k+1}$, in addition to $b_k$, we need prior
information on $\mathfrak{m}(\omega, n+1)$.
Finally, the results of the present section can themselves be
seen as a sort of inverse problem of reconstructing the matrix
$\mathcal{H}_{[0,n]}$ from the knowledge of $m$-functions,
components of an eigenvector and a point in the resolvent set
of the linear pencil.
\end{document}